\newcommand{\ud}{\mathrm{d}}
\newcommand{\ii}{\mathrm{i}}
\newcommand{\R}{\mathbb R}
\theoremstyle{plain}
\newtheorem{theorem}{Theorem}[section]
\newtheorem{proposition}[theorem]{Proposition}
\theoremstyle{definition}
\newtheorem{remark}[theorem]{Remark}
\numberwithin{equation}{section}
\begin{document}

\title[Generalised solutions to LS/NLS with point defect]
{Generalised solutions to linear and non-linear Schr\"{o}dinger-type equations with point defect: Colombeau and non-Colombeau regimes}
\author[N.~Dugand\v{z}ija]{Nevena Dugand\v{z}ija}
\address[N.~Dugand\v{z}ija]{University of Novi Sad, Faculty of Sciences, Department of Mathematics and Informatics  \\ Trg Dositeja Obradovi\'ca 4 \\ Novi Sad (Serbia)}
\email{nevena.dugandzija@dmi.uns.ac.rs}
\author[A.~Michelangeli]{Alessandro Michelangeli}
\address[A.~Michelangeli]{Department of Mathematics and Natural Sciences, Prince Mohammad Bin Fahd University \\ Al Khobar 31952 (Saudi Arabia) \\
and Hausdorff Center for Mathematics, University of Bonn \\ Endenicher Allee 60 \\ D-53115 Bonn (Germany)\\ and TQT Trieste Institute for Theoretical Quantum Technologies, Trieste (Italy)}
\email{amichelangeli@pmu.edu.sa}
\author[I.~Vojnovi\'c]{Ivana Vojnovi\'c}
\address[I.~Vojnovi\'c - corresponding author]{University of Novi Sad \\  Faculty of Sciences\\ Department of Mathematics and Informatics  \\ Trg Dositeja Obradovi\'ca 4 \\ Novi Sad (Serbia)}
\email{ivana.vojnovic@dmi.uns.ac.rs}


\begin{abstract}
For a semi-linear Schr\"{o}dinger equation of Hartree type in three spatial dimensions, various approximations of singular, point-like perturbations are considered, in the form of potentials of very small range and very large magnitude, obeying different scaling limits. The corresponding nets of approximate solutions represent actual generalised solutions for the singular-perturbed Schr\"{o}dinger equation. The behaviour of such nets is investigated, comparing the distinct scaling regimes that yield, respectively, the Hartree equation with point interaction Hamiltonian vs the ordinary Hartree equation with the free Laplacian. In the second case, the distinguished regime admitting a generalised solution in the Colombeau algebra is studied, and for such a solution compatibility with the classical Hartree equation is established, in the sense of the Colombeau generalised solution theory.
\end{abstract}

\date{\today}

\subjclass[2010]{35Q55, 46F30}

\keywords{Non-linear Schr\"{o}dinger equation, Hartree equation, generalised functions and distributions, Colombeau generalised functions, compatibility of Colombeau solutions, quantum Hamiltonian of point interaction.}

\thanks{Partially supported by the Ministry of Science, Technological Development and Innovation of the Republic of Serbia (Grant No. 451-03-47/2023-01/200125)(N.D.~and I.V.), the Italian National Institute for Higher Mathematics INdAM (A.M.) and the Alexander von Humboldt Foundation, Bonn (A.M.).
}

\maketitle


\section{Introduction and background}\label{sec:intro-main}

Popular and ubiquitous partial differential equations such as the linear Schr\"{o}dinger equation
\begin{equation}\label{eq:LS}
 \ii\partial_t u\;=\;-\Delta u + V u
\end{equation}
and the non-linear Schr\"{o}dinger equation
\begin{equation}\label{eq:NLS}
 \ii\partial_t u\;=\;-\Delta u + V u+g(u)
\end{equation}
(in the complex-valued unknown $u\equiv u(t,x)$ with $t\in\mathbb{R}$, $x\in\mathbb{R}^d$, $d\in\mathbb{N}$, and for given measurable function $V:\mathbb{R}^d\to\mathbb{R}$ and given non-linearity $g(u)$, i.e., a semi-linearity) are meaningful, among many other contexts, in applications where a point-like perturbation of the $d$-dimensional Laplacian $\Delta$ is added, modelling some type of defect or impurity extremely localised at one point or at a discrete set of points, and/or also when the initial data of the associated Cauchy problem display themselves a point-like profile of infinite magnitude.

 This class of problems appear, with different motivations and tools, in different branches of the literature.

 One mainstream concerns \emph{generalised solutions} to PDEs, beyond the ordinary solution theory in customary Sobolev spaces $H^s(\mathbb{R}^d)$, $s\geqslant 0$. This has led to fundamental developments both in the linear and in the non-linear case.

 The \emph{linear} theory of \emph{generalised functions} is the distributional one initiated by Schwartz in the late 1940's (`generalised functions' refers to the embedding of $L^1_{\mathrm{loc}}(\Omega)$ into $\mathcal{D}'(\Omega)$, for an open $\Omega\subset\mathbb{R}^d$), whose triumph for differential equations, among other fields of analysis, is best exemplified by the Ehrenpreis-Malgrange existence result 
 (see, e.g., \cite[Theorem IX.23]{rs2}, \cite[Theorem 7.3.10]{Hormander_AnalLinPartDiffOps1-1983-2003}) for which every linear PDE with constant coefficients has a solution in $\mathcal{D}'(\Omega)$), and whose limitations are represented by the existence of linear PDEs with variable (even polynomial) coefficients without solutions in $\mathcal{D}'(\Omega)$, as shown first by Lewy \cite{Lewy-1957-nosolution}. In this respect it is worth recalling that a non-$L^2$ solution theory for \eqref{eq:LS} generically features \emph{non-uniqueness}, since the Cauchy problem for \eqref{eq:LS} is a characteristic initial value problem in the sense that the initial plane $t=0$ is characteristic for the equation (we refer, e.g., to H\"{o}rmander's discussion \cite[Chapter VIII]{Hormander-LinearPDOp-1976} and \cite[Section 13.6]{Hormander_AnalLinPartDiffOps2-1983-2005}; one may also find therein an example of non-zero solutions to the free Schr\"{o}dinger equation with zero initial value -- see in particular \cite[Theorem 8.9.2 and Section 8.9]{Hormander-LinearPDOp-1976} and \cite[Example 13.6.5]{Hormander_AnalLinPartDiffOps2-1983-2005} -- however, none of such solutions belongs to $L^2(\mathbb{R}^d)$ for any positive time).

 For \emph{non-linear} PDEs a counterpart distributional solution theory is problematic in many instances -- one instructive representative is the Kenig-Ponce-Vega ill-posedness in $L^\infty([0,+\infty),\mathcal{S}'(\mathbb{R}))$ of \eqref{eq:NLS} with  $V\equiv 0$, and $g(u)=\pm|u|^2u$, when the initial datum is a Dirac delta distribution \cite{Kenig-Ponce-Vega-illpos2001}. The main underlying fundamental obstruction is Schwartz's classical result \cite{Schwartz-distrib-multipl-1954} that for an associative algebra $\mathcal{A}$ containing $\mathcal{D}'(\mathbb{R})$, having the constant function $1$ as identity, and a derivative satisfying the Leibniz rule and agreeing with the derivative in $\mathcal{D}'(\mathbb{R})$, the product in $\mathcal{A}$ cannot agree with the ordinary product on all continuous functions.

 In order to accommodate a point-like, or measure-type defect in the Cauchy problem (in the equation and/or in the initial data), and hence to manipulate, at least in the non-linear case, possibly ill-defined products of distributions, an industry has developed over the years to give meaning to generalised solutions to partial differential equations of Schr\"{o}dinger, heat, KdV, semi-linear wave, and Burgers type, among others, within suitable algebraic structures that extend the distribution space $\mathcal{D}'(\mathbb{R}^d)$ and are identified by representatives in the form of nets of smooth solutions to regularised problems, i.e., where the original singular coefficients (viewed as generalised functions) or singular initial data are suitably smoothed out. To exemplify, for concreteness with the singular version of \eqref{eq:LS} of the form $\ii\partial_t u=Au$ with ``$A=-\Delta+\delta(x)$'', a generalised solution involves the existence of limits of the form
 \begin{equation}\label{eq:A-Aeps}
  \lim_{\varepsilon\downarrow 0}\varepsilon^{-a}\big\|\ii\partial_t u_\varepsilon-A_\varepsilon u_\varepsilon\big\|_{L^\infty_t([0,T),L^2_x)}\;=\;0\,,\qquad a>0\,,
 \end{equation}
 where $(A_\varepsilon)_{\varepsilon>0}$ and $(u_\varepsilon)_{\varepsilon\in(0,1]}$ are suitable nets, respectively, of regularised operators representing $A$ and of smooth functions representing the generalised solution.

 Let us briefly outline the general line of thought behind such a construction.
 Any element $\xi\in\mathcal{D}'(\mathbb{R}^d)$ has a linear action $\varphi\mapsto\langle\xi,\varphi\rangle_{\mathcal{D}',\mathcal{D}}$ on test functions $\varphi\in\mathcal{D}(\mathbb{R}^d)$, and for a net $(\delta_\varepsilon)_{\epsilon>0}$ in $\mathcal{D}(\mathbb{R}^d)$ converging to the Dirac distribution $\delta$ one builds a family of regularisations $(\xi_\varepsilon)_{\epsilon>0}$ by convolution $\xi_\varepsilon(x)=\langle \xi(y),\delta_\varepsilon(x-y)\rangle_{\mathcal{D}',\mathcal{D}}=(\xi * \delta_\varepsilon)(x)$: indeed, $\xi_\varepsilon\to\xi$ weakly in $\mathcal{D}'(\mathbb{R}^d)$ as $\varepsilon\downarrow 0$. Identifying two nets $(\xi_\varepsilon)_{\epsilon>0}$ and $(\widetilde{\xi}_\varepsilon)_{\epsilon>0}$ by having the same limit yields a representation of $\mathcal{D}'(\mathbb{R}^d)$ as equivalent classes of nets of regularisations. In doing so, what really counts is the action of $\xi$ on nets of test functions $\mathcal{D}'$-converging to $\delta$-measures supported at points $x\in\mathbb{R}^d$, and the collection of nets of the type $(\xi_\varepsilon)_{\epsilon>0}$, with $\xi_\varepsilon\in C^\infty(\mathbb{R}^d)$, forms a differential algebra. Now, with the identification of nets \emph{with the same limit} the non-linear structure is lost, and therefore in order to have a non-linear theory of generalised functions one should manipulate regularisations but \emph{identify less}. The identification should be made by means of factorisation with respect to a differential ideal. Multiple approaches were devised following this spirit, differing by the choice of the ideal, such as those by Laugwitz \cite{Laugwitz-1961} (non-standard internal functions), Maslov \cite{Maslov-1979} (negligible sequences in asymptotics), Colombeau \cite{Colombeau-1984,Colombeau-1985} (polynomial estimates in the regularisation parameter), Egorov \cite{Egorov-1990} (localised internal functions), as well as combinations and refinements (e.g., by Rosinger \cite{Rosinger-LNM1978,Rosinger-1980}, Mallios and Rosinger \cite{Mallios-Rosinger-1999} -- see also \cite{Grosser-Farkas-Kunzinger-Steinbauer-2001,Grosser-Kunzinger-Oberguggenberger-Steinbauer-2001}).

 One major framework for this activity, among others in the same spirit, consists of the various Colombeau-type generalised function spaces and algebras, for which, beside the recap provided in Appendix \ref{app:Colombeau} here, we refer to the original constructions by Colombeau \cite{Colombeau-1984,Colombeau-1985} in the early 1980's, the precursors by Rosinger \cite{Rosinger-LNM1978,Rosinger-1980,Rosinger-1987}, the refinements by Biagioni \cite{Biagioni-LNM1990} and Oberguggenberger \cite{Oberguggenberger-1992}, as well as the modern reviews by Grosser, Farkas, Kunzinger, and Steinbauer \cite{Grosser-Farkas-Kunzinger-Steinbauer-2001}, and by Grosser, Kunzinger, Oberguggenberger, and Steinbauer \cite{Grosser-Kunzinger-Oberguggenberger-Steinbauer-2001}. Emblematic representative examples of this flourishing activity over the decades are, among others, \cite{Bu-1996,Dugandzija-Nedeljkov-2019,DV-2022-INdAMSpringer,Hormann-2011,Hormann-2018,Nedeljkov-Oberguggenberger-Pilipovic-2005,Nedeljkov-Pilipovic-1997,Nedeljkov-Pilipovic-RajterCiric-2005-heatdelta,Oberguggenberger-Wang-1994}.

 A parallel, or even preceding motivation for a linear and non-linear theory of generalised functions is to be ascribed to the need of giving rigour to heuristic computations involving free field operators (operator-valued distributions) and heuristic formulas for Hamiltonian and Lagrangian densities in quantum field theory.

 Other approaches to the study of (non-)linear Schr\"{o}dinger equations of the type \eqref{eq:LS}-\eqref{eq:NLS} with point-like defects, with methodologies and tools independent from the notion of generalised functions, are to be found in the framework of singular perturbations of elliptic operators, a mathematical subject directly motivated by and applied to quantum mechanical modelling, where the focus is to give rigorous meaning to self-adjoint realisations, over an appropriate $L^2$-space, of a differential operator such as the Laplacian $\Delta$ initially restricted to the domain of smooth functions compactly supported away from the configurations the delta-like perturbation is localised at (we refer to the general monographs \cite{albeverio-solvable,albverio-kurasov-2000-sing_pert_diff_ops,MathChallZR2021,GM-SelfAdj_book-2022}, as well as to the recent studies of specific models \cite{MP-2015-2p2,MO-2016,MO-2017,BMO-2017,MOS-2018-fractional-and-perturbation,M2020-BosonicTrimerZeroRange} and to the references therein). This line of reasoning, more operator-theoretic in nature, provides unambiguous constructions of the linear operator underlying \eqref{eq:LS}-\eqref{eq:NLS} with point-like defects, which then makes a Hilbert space solution theory possible for the induced Schr\"{o}dinger equation \cite{MOS-SingularHartree-2017,CMY-2018-2Dwaveop,CMY-2018-2DwaveopERR,MS_resonances_2020,Caccia-Finco-Noja-deltaNLS-2020,Yajima-2021-Lpbdd-delta-2D,Adami-Boni-Carlone-Tentarelli-2021,Fukaya-Georgiev-Ikeda-2021,GMS_SingularHartree2D}. A counterpart analysis in this direction can also be carried on within the formalism of non-standard analysis, in the spirit of Albeverio, Fenstad, and H\o{}egh Krohn \cite{Albeverio-Fenstad-HoeghKrohn-1979_singPert_NonstAnal} -- the recent investigation \cite{Scandone-Baglini-Simonov-2021} provides an enlightening example of this line of thought.

 The intimate implicit link between the operator-theoretic construction of self-adjoint singular-perturbed Laplacians and the above-mentioned notion of generalised solutions is represented by the circumstance that one way to recover the self-adjoint operator of interest is through a net of approximants in the form of suitably regularised Schr\"{o}dinger operators with short-scale potentials, much the same as for the approximation of ``$A=-\Delta+\delta(x)$'' by means of the net $(A_\varepsilon)_{\varepsilon>0}$ in \eqref{eq:A-Aeps} above. This idea, in the physically relevant context of the delta-type perturbation of $-\Delta$ in $L^2(\mathbb{R}^3)$, was first put forward by Berezin and Faddeev \cite{Berezin-Faddeev-1961} in the form of a renormalisation scheme for the operator $-\Delta+\mu_\varepsilon V_\varepsilon(x)$ ($V_\varepsilon$ being the potential supported on a spatial scale $\varepsilon$), and then made rigorous through a variety of techniques by Albeverio, H\o{}egh-Krohn, and Streit \cite{Albeverio-HK-Streit-1977}, Nelson \cite{Nelson-1977}, Albeverio, Fenstad, and H\o{}egh-Krohn \cite{Albeverio-Fenstad-HoeghKrohn-1979_singPert_NonstAnal}, and Albeverio and H\o{}egh-Krohn \cite{AHK-1981-JOPTH} (see also \cite{mm-2015-KP,MS-2018-shrinking-and-fractional}). Exhaustive references for general dimension $d=1,2,3$ may be found in \cite{albeverio-solvable}, and references for the related multi-particle problem may be found in \cite{M2020-BosonicTrimerZeroRange,GM-SelfAdj_book-2022}.

 The preceding overview explains the meaningfulness, say, of the class of (linear or non-linear) Cauchy problems (in the unknowns $u_\varepsilon\equiv u_\varepsilon(t,x)$)
 \begin{equation}\label{eq:epsNLS}
  \begin{cases}
   \ii\partial_t u_\varepsilon\,=\,-\Delta u_\varepsilon +V_\varepsilon u_\varepsilon+g(u_\varepsilon)\,, \\
   u_\varepsilon(0,\cdot)\,=\,a_\varepsilon
  \end{cases}\qquad (\varepsilon>0)
 \end{equation}
 associated, possibly from different conceptual perspectives, to the modification of \eqref{eq:LS} or \eqref{eq:NLS} obtained by replacing the linear operator $-\Delta$ with some unambiguous version of ``$-\Delta+\mu\delta(x)$'', where indeed $(V_\varepsilon)_{\varepsilon>0}$ and $(a_\varepsilon)_{\varepsilon>0}$ are convenient nets, respectively, of regularised potentials and regularised initial data which \emph{represent} singular, point-like objects, and the net $(u_\varepsilon)_{\varepsilon>0}$ is the \emph{representative} of a generalised solution.

 Our focus in the present study concerns an issue that therefore only becomes natural to consider in view of the above variety and overlaps of motivations and applications: the comparison of different nets $(u_\varepsilon)_{\varepsilon>0}$ of solutions to \eqref{eq:epsNLS}, depending on different scaling limits for $V_\varepsilon$ and $a_\varepsilon$, and hence the comparison between different possibilities of realising and approximating singular-perturbed NLS equations.

 The concrete set-up is presented in Section \ref{sec:setup-mainresults}, where we chose dimensionality $d=3$. An approximate singular perturbation of \eqref{eq:LS}-\eqref{eq:NLS} localised at the origin is modelled by means of potentials $V_\varepsilon$ for which a whole range of scalings in $\varepsilon$ is explored. Our main result, Theorem \ref{thm:main1} below, shows that the generic effect on the solution is negligible in the spatial $L^2$-norm of the solutions, i.e., $u_\varepsilon(t,\cdot)$ $L^2$-converges at any later time to the solution of the corresponding non-linear equation without singular perturbation. An exceptional case may occur, precisely the point interaction already well known in quantum mechanics, in a particular scaling of $V_\varepsilon$ and under special spectral requirements at zero energy. In the scaling of $V_\varepsilon$ that corresponds to the Colombeau generalised solution theory, our analysis establishes a compatibility result for a class of non-linear Schr\"odinger equations with distributional $\delta$-coefficient (Theorem \ref{thm:Colombeau-case}).

 The actual technical preparation is made in Section \ref{sec:resolvent-convergence} and consists of a control of the resolvent convergence for Schr\"{o}dinger operators of the form $-\Delta+V_\varepsilon$. Theorems \ref{thm:main1} and \ref{thm:Colombeau-case} are then proved in Section \ref{sec:proofmainresults}, and, for completeness of presentation, Appendix \ref{app:Colombeau} provides a concise review of the Colombeau algebra (and the notion of generalised solutions and compatibility for Schr\"{o}dinger equations), whereas Appendix \ref{app:Delta-alpha} summarises the definition and the main properties of the three-dimensional Hamiltonian of point interaction.

  \section{Set-up and main results}\label{sec:setup-mainresults}

  As emergent PDEs after a process of limit we consider two time-dependent Schr\"{o}dinger-type equations in three spatial dimensions, that is,
  \begin{equation}\label{eq:LS-NLS-Delta}
   \ii\partial_t u\;=\;-\Delta u +  (w*|u|^2)u
  \end{equation}
  and 
  \begin{equation}\label{eq:LS-NLS-Delta-alpha}
   \ii\partial_t v\;=\;-\Delta_\alpha v +  (w*|v|^2)v\,,
  \end{equation}
  for given $\alpha\in\mathbb{R}\cup\{\infty\}$ and given real-valued measurable potential $w$, in the unknowns $u\equiv u(t,x)$ and $v\equiv v(t,x)$, with $t\in\mathbb{R}$ and $x\in\mathbb{R}^3$, where the convolution in the cubic Hartree non-linearity is with respect to the $x$-variable. The choice $w\equiv \mathbf{0}$ vs $w \equiv\!\!\!\!\!/ \;\:\mathbf{0}$ discriminates between the linear and the non-linear problem.

  The linear action $u\mapsto\Delta_\alpha u$ in \eqref{eq:LS-NLS-Delta-alpha} refers to the singular perturbation of the Laplacian with a point interaction supported at the origin. By this one means the standard construction, concisely recalled in Appendix \ref{app:Delta-alpha}, of the self-adjoint extension, with respect to $L^2(\mathbb{R}^3)$, of the densely defined symmetric operator $(-\Delta)\upharpoonright C^\infty_c(\mathbb{R}^3\setminus\{0\})$, characterised by having inverse $s$-wave scattering length equal to $-(4\pi\alpha)^{-1}$. This is the relevant Hamiltonian of point interaction in quantum mechanics for a Schr\"{o}dinger particle coupled to the origin by means of a non-trivial interaction of zero range.

  Next, as approximants to \eqref{eq:LS-NLS-Delta} and \eqref{eq:LS-NLS-Delta-alpha}, we consider, for each $\varepsilon\in(0,1]$, the time-dependent Schr\"{o}dinger equation
    \begin{equation}\label{eq:LS-NLS-eps}
   \ii\partial_t u_\varepsilon\;=\;-\Delta u_\varepsilon + V_\varepsilon u_\varepsilon+ (w*|u_\varepsilon|^2)u_\varepsilon\,,
  \end{equation}
  in the unknown $u_\varepsilon\equiv u_\varepsilon(t,x)$, with $(t,x)\in\mathbb{R}\times\mathbb{R}^3$, where $V_\varepsilon$ is a real-valued potential effectively supported around the origin at a spatial scale $\varepsilon$, and is meant to represent a singular, delta-like profile of some magnitude and centred at $x=0$. The general law by which $V_\varepsilon$ is assumed to shrink around the origin is
  \begin{equation}\label{eq:general_scaling}
   V_\varepsilon(x)\;:=\;\frac{\,\eta(\varepsilon)\,}{\:\varepsilon^\sigma}V\Big(\frac{x}{\varepsilon}\Big)\,,
  \end{equation}
  for a given measurable function $V:\mathbb{R}^3\to\mathbb{R}$, a given smooth function $\eta:[0,1]\to[0,+\infty)$ such that $\eta(0)=\eta(1)=1$, and a given $\sigma\geqslant 0$.
   In \eqref{eq:general_scaling} the parameter $\sigma$ governs the magnitude of the scaling, whereas the function $\eta$ only provides a `distortion' factor that is relevant in certain modelling (otherwise it can be just assumed to be $\eta\equiv 1$). Convenient regularity, integrability, and decay assumptions on $V$ are needed in practice, as is going to be declared in due time.

   The scaling law \eqref{eq:general_scaling} covers distinguished and fundamentally different regimes. A meaningful one is with $\sigma=3$, in which case $V_\varepsilon\to (\int_{\mathbb{R}^3}V)\delta(x)$ in $\mathcal{D}'(\mathbb{R}^3)$ as $\varepsilon\downarrow 0$, provided that $\int_{\mathbb{R}^3}V$ is (non-zero, and) finite. In particular, as discussed in Appendix \ref{app:Colombeau}, the case where $V\in C^\infty_c(\mathbb{R}^3)$ or $V\in\mathcal{S}(\mathbb{R}^3)$, $V\geqslant 0$ (and not identically zero), $\eta\equiv 1$, and $\sigma=3$ is the standard `\emph{Colombeau regime}', the net $(u_\varepsilon)_{\varepsilon\in(0,1]}$ then representing a generalised solution to the formal equation
    \begin{equation}\label{eq:LS-NLS-Delta-delta}
   \ii\partial_t u\;=\;-\Delta u + a_V\delta u +  (w*|u|^2)u\,,
  \end{equation}
  where $a_V:=\int_{\mathbb{R}^3}V(x)\,\ud x$.

  The regimes $\sigma\in[0,3)$ and $\sigma>3$ in \eqref{eq:general_scaling} represent a point-like defect that, as compared to \eqref{eq:LS-NLS-Delta-delta}, is, respectively, \emph{much weaker} or \emph{much stronger} than the Dirac delta. Somewhat counter-intuitively, as we shall see, the net effect may be a trivial or non-trivial ``impurity'' localised at $x=0$. In fact, we shall prove that as long as $\sigma\in[0,3]$ the only non-trivial interaction that may emerge in the limit $\varepsilon\downarrow 0$ is in the already well known distinguished regime $\sigma=2$, under the spectral condition of presence of a zero-energy resonance for $-\Delta+V$, and absence of zero-energy eigenvalue, on which we shall comment in a moment.

  It is important to underline for the present purposes the following well-posedness result under standard working assumptions on the potentials $V$ and $w$.

  \begin{theorem}\label{thm:well-posedness}
   When
   \begin{equation}\label{eq:conditions-w}
    w\,\in\,L^\infty(\mathbb{R}^3,\mathbb{R})\cap W^{1,3}(\mathbb{R}^3,\mathbb{R})\,,\qquad \textrm{$w$ is an even function,}
   \end{equation}
   and 
  \begin{equation}\label{eq:conditions-V-L32p-Linf}
   V\in L^p(\mathbb{R}^3,\mathbb{R})+L^\infty(\mathbb{R}^3,\mathbb{R})\,,\qquad p>\frac{3}{2}\,,
  \end{equation}
   all three equations \eqref{eq:LS-NLS-Delta}-\eqref{eq:LS-NLS-eps} are globally well-posed in $L^2(\mathbb{R}^3)$.    
   In particular, for given initial datum in $L^2(\mathbb{R}^3)$ at $t=0$ they all admit a unique strong $L^2$-solution preserving the $L^2$-norm at all times.
  \end{theorem}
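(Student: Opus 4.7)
The plan is to treat the three equations \eqref{eq:LS-NLS-Delta}--\eqref{eq:LS-NLS-eps} uniformly as an abstract semi-linear Cauchy problem $\ii\partial_t u = Hu + F(u)$, $u(0)=u_0$, with $F(u):=(w*|u|^2)u$ and $H$ self-adjoint on $L^2(\R^3)$, and then to run a standard Duhamel contraction plus mass-conservation iteration. Self-adjointness is immediate for $H=-\Delta$ with $D(H)=H^2(\R^3)$ and is built into the construction recalled in Appendix \ref{app:Delta-alpha} for $H=-\Delta_\alpha$. For $H_\varepsilon:=-\Delta+V_\varepsilon$, the decomposition $V=V_1+V_\infty\in L^p+L^\infty$ with $p>3/2$ is preserved by the scaling \eqref{eq:general_scaling}, and in dimension three such potentials are infinitesimally $(-\Delta)$-bounded (via $H^2(\R^3)\hookrightarrow L^q(\R^3)$ for every $q\in[2,+\infty]$ combined with H\"older), so Kato-Rellich supplies self-adjointness on $H^2(\R^3)$. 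In all three cases $e^{-\ii tH}$ is therefore a unitary group on $L^2(\R^3)$.

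The core estimate I would prove is
\begin{equation*}
\|F(u)-F(v)\|_2\;\leqslant\;C\|w\|_\infty\bigl(\|u\|_2^2+\|v\|_2^2\bigr)\|u-v\|_2,
\end{equation*}
obtained by splitting $F(u)-F(v)=(w*|u|^2)(u-v)+(w*(|u|^2-|v|^2))v$ and invoking Young's inequality in the form $\|w*|u|^2\|_\infty\leqslant\|w\|_\infty\|u\|_2^2$. Combined with unitarity of $e^{-\ii tH}$, this bound makes the Duhamel map $u\mapsto e^{-\ii tH}u_0-\ii\int_0^t e^{-\ii(t-s)H}F(u(s))\,\ud s$ a contraction on a small ball of $C([0,T],L^2(\R^3))$ for $T=T(\|u_0\|_2)$ small enough, yielding a unique local strong $L^2$-solution.

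For $L^2$-norm conservation I plan to approximate $u_0\in L^2(\R^3)$ by data $u_0^{(n)}\in D(H)$. The corresponding solutions $u^{(n)}$ then lie in $C^1([0,T],L^2)\cap C([0,T],D(H))$, so one may differentiate and read off
\begin{equation*}
\frac{\ud}{\ud t}\|u^{(n)}(t)\|_2^2\;=\;2\im\langle Hu^{(n)},u^{(n)}\rangle+2\im\langle F(u^{(n)}),u^{(n)}\rangle\;=\;0,
\end{equation*}
the first pairing vanishing by self-adjointness of $H$ and the second because $w*|u^{(n)}|^2$ is real (since $w$ is real and even, which makes $\int(w*|u^{(n)}|^2)|u^{(n)}|^2\in\R$). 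Continuous $L^2$-dependence on data transfers the identity to $u$, and since the local existence time is governed only by the now-conserved $\|u(t)\|_2$, iteration of the local result yields the global strong $L^2$-solution.

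The step I expect to be most delicate is justifying that the regularised solution $u^{(n)}$ actually remains in $D(H)$ up to time $T$. For $H=-\Delta$ and $H=-\Delta+V_\varepsilon$ this is routine: the hypothesis $w\in W^{1,3}$ provides the companion bound $\|\nabla w*|u|^2\|_\infty\leqslant\|\nabla w\|_3\|u\|_{H^1}^2$ needed to promote the Lipschitz estimate for $F$ to the graph norm on $H^2(\R^3)$, and Picard iteration then converges also in $C([0,T],H^2(\R^3))$. For $H=-\Delta_\alpha$, however, $D(H)$ is not contained in $H^2(\R^3)$ but consists of functions with a prescribed Bethe-Peierls singularity at the origin, and one must verify that multiplication by the smooth factor $w*|u|^2$ preserves this singular subspace; this point is addressed in the Hartree-with-point-interaction literature, for instance \cite{MOS-SingularHartree-2017}, on whose global $L^2$-theory one can rely directly for \eqref{eq:LS-NLS-Delta-alpha}.
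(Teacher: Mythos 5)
The paper does not actually prove this theorem: it observes that for \eqref{eq:LS-NLS-Delta} and \eqref{eq:LS-NLS-eps} the statement is classical (citing \cite[Theorem 4.6.1 and Corollary 4.6.5]{cazenave}) and for \eqref{eq:LS-NLS-Delta-alpha} it invokes \cite[Theorem 1.5]{MOS-SingularHartree-2017}. Your proposal reconstructs exactly that classical route -- unitary group plus the $L^2$-Lipschitz bound $\|F(u)-F(v)\|_2\lesssim\|w\|_\infty(\|u\|_2^2+\|v\|_2^2)\|u-v\|_2$ from Young's inequality, Duhamel contraction with $T=T(\|u_0\|_2)$, conservation of mass, iteration -- and that architecture is sound; it is also the same estimate the paper itself uses later for the $Q_\varepsilon$ term in the proof of Theorem \ref{thm:main1}. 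Deferring the domain question for $-\Delta_\alpha$ to \cite{MOS-SingularHartree-2017} is likewise exactly what the paper does.

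There is, however, one concretely wrong step: the claim that $V\in L^p+L^\infty$ with $p>\frac32$ is infinitesimally $(-\Delta)$-\emph{operator}-bounded ``via $H^2\hookrightarrow L^q$ and H\"older,'' so that Kato--Rellich gives self-adjointness of $H_\varepsilon$ on $H^2(\mathbb{R}^3)$. H\"older gives $\|Vu\|_2\leqslant\|V\|_p\|u\|_q$ only for $\frac1p+\frac1q=\frac12$, which forces $p\geqslant 2$; for $p\in(\frac32,2)$ no such $q$ exists, and indeed $V=|x|^{-a}\chi_{\{|x|<1\}}$ with $a\in(\frac32,2)$ lies in $L^p$ for some $p>\frac32$ yet $Vu\notin L^2$ for generic $u\in H^2$ with $u(0)\neq 0$. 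In this range one only has infinitesimal \emph{form}-boundedness, and the correct construction -- the one the paper sets up in its Remark after Theorem \ref{thm:well-posedness} and uses throughout Section \ref{sec:resolvent-convergence} -- is the KLMN form sum with form domain $H^1(\mathbb{R}^3)$. This does not damage your local theory (the form sum is still self-adjoint and $e^{-\ii tH_\varepsilon}$ still unitary, and the fixed-point argument lives entirely in $L^2$), but it does break the mass-conservation step as you wrote it for $H_\varepsilon$: you can no longer approximate by data in ``$D(H_\varepsilon)=H^2$'' and propagate $H^2$-regularity, since $\mathrm{dom}(H_\varepsilon)$ need not be $H^2(\mathbb{R}^3)$ nor invariant in any obvious way. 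The standard repair is either to regularise at the level of the equation (truncate $V$, or work in the form domain $H^1$ where the energy/charge identities are justified by the KLMN theory, as in Cazenave's treatment) or to approximate the potential by bounded ones and pass to the limit using the very $L^2$-stability estimate you already proved. With that replacement the argument closes.
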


   Here $W^{s,p}$ is the standard Sobolev space notation. Theorem \ref{thm:well-posedness} is completely classical  for the ordinary (non-linear) Schr\"{o}dinger equations \eqref{eq:LS-NLS-Delta} and \eqref{eq:LS-NLS-eps}, ({\color{red} in fact, it is formulated here, for the purposes of the present discussion, with even more restrictive hypotheses than the usual ones}: see, e.g., \cite[Theorem 4.6.1 and Corollary 4.6.5]{cazenave}), and it was recently established for the first time in \cite[Theorem 1.5]{MOS-SingularHartree-2017} for the singular-perturbed equation \eqref{eq:LS-NLS-Delta-alpha}.

   \begin{remark} If $V$ satisfying \eqref{eq:conditions-V-L32p-Linf} has a local $L^p$-singularity for some $p>\frac{3}{2}$, then that is also a $L^{\frac{3}{2}}$-singularity. Moreover, $L^{\frac{3}{2}}(\mathbb{R}^3,\mathbb{R})\subset\mathcal{R}$,  where $\mathcal{R}$ is the Rollnik class over $\mathbb{R}^3$, consisting of the functions $f$ for which the Rollnik norm 
  \begin{equation}
   \|f\|_{\mathcal{R}}\;:=\;\bigg(\iint_{\mathbb{R}^3\times\mathbb{R}^3}\frac{\,\overline{f(x)}\,f(y)}{|x-y|^2}\,\ud x\,\ud y\bigg)^{\!\frac{1}{2}}
  \end{equation}
  is finite. For potentials $V\in \mathcal{R}+L^\infty(\mathbb{R}^3,\mathbb{R})$ the Schr\"{o}dinger operator $-\Delta+V$ is self-adjointly realised in $L^2(\mathbb{R}^3)$ as a form sum with quadratic form $H^1(\mathbb{R}^3)$ \cite[Theorems X.17 and X.19]{rs2}.
   \end{remark}

  Next, let us formulate the above-mentioned resonance condition explicitly. It is actually a well understood regime in quantum mechanics, which is known to produce, in the limit $\varepsilon\downarrow 0$, a point interaction of the type $-\Delta_\alpha$. When
  \begin{equation}\label{eq:conditions-V-Rollnik}
   \textrm{$V$ is real-valued}\,,\qquad 
   V\in \mathcal{R}\,, 
  \end{equation}
  the Birman-Schwinger operator $u(-\Delta)^{-1}v$, where $v(x):=\sqrt{|V(x)|}$ and $u(x):=\sqrt{|V(x)|\,}\,\mathrm{sign}(V(x))$, is compact in $L^2(\mathbb{R}^3)$ (it is actually a Hilbert-Schmidt operator, as is straightforward to check). When, in addition to \eqref{eq:conditions-V-Rollnik},
  \begin{equation}\label{eq:conditions-V-L1}
   V\in L^1(\mathbb{R}^3,\mathbb{R})\,,
  \end{equation}
  one says that $-\Delta+V$ satisfies the \emph{(simple) resonance condition} at zero energy if $u(-\Delta)^{-1}v$ admits the simple eigenvalue $-1$, that is,
  \begin{equation}\label{eq:ZERcondphi}
      \begin{array}{c}
    \exists\,\phi\in L^2(\mathbb{R}^3)\setminus\{\mathbf{0}\} \\
    \textrm{unique, up to multiples}
   \end{array}\qquad 
\textrm{ such that }\qquad  u(-\Delta)^{-1}v\,\phi\;=\;-\phi
  \end{equation}
(in fact such a $\phi$ can be chosen to be real-valued), and if \emph{in addition} the function
 \begin{equation}\label{eq:psi-resonance}
  \psi\;:=\;(-\Delta)^{-1}v\phi
 \end{equation}
 satisfies
 \begin{equation}\label{eq:ZERcondphi2}
  \psi\in L^2_{\mathrm{loc}}(\mathbb{R}^3)\setminus L^2(\mathbb{R}^3)\,.
 \end{equation}
 Actually (see, e.g., \cite[Lemma I.1.2.3]{albeverio-solvable}), for any non-zero $\phi\in L^2(\mathbb{R}^3)$ satisfying $u(-\Delta)^{-1}v\,\phi=-\phi$ (under \eqref{eq:conditions-V-Rollnik}-\eqref{eq:conditions-V-L1}) the corresponding $\psi:=(-\Delta)^{-1}v\phi$ has the properties $\psi\in L^2_{\mathrm{loc}}(\mathbb{R}^3)$, $(-\Delta+V)\psi=0$ in $\mathcal{D}'(\mathbb{R}^3)$, and
 \begin{equation}\label{eq:psiL2locnotL2}
  \psi\in L^2_{\mathrm{loc}}(\mathbb{R}^3)\setminus L^2(\mathbb{R}^3)\qquad\Leftrightarrow\qquad \int_{\mathbb{R}^3}v\phi\,\ud x\,=\,\int_{\mathbb{R}^3} V\psi\,\ud x\,\neq\,0\,.
 \end{equation}
  Under the resonance condition \eqref{eq:conditions-V-Rollnik}-\eqref{eq:conditions-V-L1}-\eqref{eq:ZERcondphi}-\eqref{eq:ZERcondphi2} the function $\psi$ is referred to as the \emph{zero-energy resonance} for $-\Delta+V$. Furthermore, \eqref{eq:conditions-V-Rollnik}-\eqref{eq:conditions-V-L1}-\eqref{eq:ZERcondphi}-\eqref{eq:ZERcondphi2} provide a condition of \emph{lack of zero-energy eigenvalue} for $-\Delta+V$: for, if $(-\Delta+V)\psi=0$ for some $\psi\in H^1(\mathbb{R}^3)$, then $\phi:=u\psi\in L^2(\mathbb{R}^3)\setminus\{0\}$ (otherwise, $-\Delta\psi=-v\phi=0$, which is impossible), and $u(-\Delta)^{-1}v\phi=u(-\Delta)^{-1}V\psi=-u\psi=-\phi$, but by assumption there is only one such $\phi$ (up to multiples) and the corresponding $\psi$ does not belong to $L^2(\mathbb{R}^3)$.


{\color{red}  In fact the above assumptions, including the restriction to the Rollnik class, may be further relaxed while keeping the following discussion essentially unaltered; yet, the main focus here is the magnitude of the scaling, and not the generality of the un-scaled potential.}

  Our first main result reads as follows.

  \begin{theorem}\label{thm:main1}
  The following are given: $\mathsf{T}>0$, $w$ satisfying \eqref{eq:conditions-w}, $V$ satisfying \eqref{eq:conditions-V-Rollnik}-\eqref{eq:conditions-V-L1}, and additionally \eqref{eq:conditions-V-L32p-Linf} when $w \equiv\!\!\!\!\!/ \;\:\mathbf{0}$, $\eta:[0,1]\to[0,+\infty)$ such that it is smooth and $\eta(0)=\eta(1)=1$, and $a\in L^2(\mathbb{R}^3)$. For fixed $\sigma>0$ and $\varepsilon\in(0,1]$, define $V_\varepsilon$ as in \eqref{eq:general_scaling}. Correspondingly, let $u_\varepsilon$ be the unique solution in $C(\mathbb{R},L^2(\mathbb{R}^3))$ to the Cauchy problem associated with \eqref{eq:LS-NLS-eps} with initial datum $a$.
   \begin{enumerate}
    \item[(i)] \emph{\textbf{[Colombeau or sub-Colombeau regimes, non-resonant.]}} Let $\sigma\in[0,3]$. In the special case $\sigma=2$, assume in addition that $-1$ is \emph{not} an eigenvalue of $u(-\Delta)^{-1}v$. When $\sigma\in(2,3]$ assume that $V$, if not identically zero, is non-negative. Then,
    \begin{equation}\label{eq:mainthmlimit1}
     \lim_{\varepsilon\downarrow 0}\|u_\varepsilon-u\|_{L^\infty([0,\mathsf{T}],L^2(\mathbb{R}^3))}=\;0\,,
    \end{equation}
   where $u$ the unique solution in $C(\mathbb{R},L^2(\mathbb{R}^3))$ to the Cauchy problem
     associated with \eqref{eq:LS-NLS-Delta} with initial datum $a$.
   \item[(ii)] \emph{\textbf{[Resonant point interaction case.]}} Let $\sigma=2$ and assume in addition that $V$ satisfies the resonance condition \eqref{eq:ZERcondphi}-\eqref{eq:ZERcondphi2} with normalisation   \begin{equation}\label{eq:phinormalisation}
    \int_{\mathbb{R}^3}\mathrm{sign}(V)|\phi|^2\,\ud x\:=\:-1\,.
   \end{equation}
   Then,
    \begin{equation}\label{eq:mainthmlimit2}
    \lim_{\varepsilon\downarrow 0}\|u_\varepsilon-v\|_{L^\infty([0,\mathsf{T}],L^2(\mathbb{R}^3))}=\;0\,,
    \end{equation}
   where $v$ the unique solution in $C(\mathbb{R},L^2(\mathbb{R}^3))$ to the Cauchy problem
     associated with \eqref{eq:LS-NLS-Delta-alpha} with initial datum $a$, with 
     \begin{equation}
      \alpha\;:=\;-\frac{\eta'(0)}{\big|\int_{\mathbb{R}^3} V\psi\,\ud x\,\big|^2}\,.
     \end{equation}
   \end{enumerate}
  \end{theorem}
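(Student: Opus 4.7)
The overall strategy is to reduce both parts of the theorem to a single Duhamel-plus-Gronwall argument, once the needed resolvent convergence has been established in Section \ref{sec:resolvent-convergence}. Let $H_\varepsilon:=-\Delta+V_\varepsilon$, let $H_0$ denote either $-\Delta$ (case (i)) or $-\Delta_\alpha$ (case (ii)), and write $U_\varepsilon(t):=e^{-\ii t H_\varepsilon}$, $U_0(t):=e^{-\ii t H_0}$. Under \eqref{eq:conditions-V-L32p-Linf} the $V_\varepsilon$ are Kato-small perturbations of $-\Delta$, so each $H_\varepsilon$ is self-adjoint on $H^2(\mathbb{R}^3)$ and generates a unitary group on $L^2$. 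By Theorem \ref{thm:well-posedness}, $u_\varepsilon$ and $u$ (or $v$) exist globally, preserve the $L^2$-norm, and satisfy the respective Duhamel identities
\begin{equation*}
u_\varepsilon(t)\,=\,U_\varepsilon(t)a-\ii\!\int_0^t U_\varepsilon(t-s)\,N(u_\varepsilon(s))\,\ud s,\qquad N(f):=(w*|f|^2)f,
\end{equation*}
and the analogous one for $u$, respectively $v$, with $U_\varepsilon$ replaced by $U_0$.

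The first key ingredient is the strong convergence $U_\varepsilon(t)\to U_0(t)$ in $L^2(\mathbb{R}^3)$, uniformly for $t\in[0,\mathsf{T}]$. In case (i) this will follow from the strong resolvent convergence $H_\varepsilon\to-\Delta$ to be proved in Section \ref{sec:resolvent-convergence} under the hypotheses on $\sigma$ and $V$ (the non-resonance assumption at $\sigma=2$ being precisely what rules out the exceptional limit) via the Trotter--Kato theorem. In case (ii), the same resolvent convergence — this time with limit $(-\Delta_\alpha-z)^{-1}$ and the explicit $\alpha$ dictated by the resonance datum and $\eta'(0)$ — plus Trotter--Kato gives $U_\varepsilon(t)\to e^{-\ii t(-\Delta_\alpha)}$ strongly. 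In both cases one only has strong (not norm) convergence of unitary groups, but this suffices because the source terms $N(u(s))$, $N(v(s))$ belong to $L^\infty([0,\mathsf{T}],L^2)$, and for a strongly convergent uniformly bounded family one has strong convergence uniformly on compact subsets of $L^2$, hence in particular uniformly along continuous $L^2$-valued curves on $[0,\mathsf{T}]$.

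The second ingredient is a uniform Lipschitz bound for the Hartree nonlinearity on $L^2$-balls: assumption \eqref{eq:conditions-w} gives $\|w*|f|^2\|_{L^\infty}\le\|w\|_{L^\infty}\|f\|_{L^2}^2$, whence
\begin{equation*}
\|N(f_1)-N(f_2)\|_{L^2}\;\le\;C(\|w\|_{L^\infty},\|a\|_{L^2})\,\|f_1-f_2\|_{L^2}
\end{equation*}
on the ball of radius $\|a\|_{L^2}$ (mass conservation keeps all solutions inside it). Subtracting the Duhamel identities for $u_\varepsilon$ and for $u$ (respectively $v$), taking $L^2$-norms, and applying the triangle inequality produces three contributions: (a) $\|(U_\varepsilon(t)-U_0(t))a\|_{L^2}$; (b) $\int_0^t\|(U_\varepsilon(t-s)-U_0(t-s))N(u(s))\|_{L^2}\,\ud s$ (respectively with $v$); (c) $\int_0^t\|N(u_\varepsilon(s))-N(u(s))\|_{L^2}\,\ud s$. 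Terms (a) and (b) tend to $0$ as $\varepsilon\downarrow 0$ uniformly in $t\in[0,\mathsf{T}]$ by Trotter--Kato together with dominated convergence in $s$, while (c) is absorbed by Gronwall's inequality using the Lipschitz bound above.

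The main technical obstacle is not the Gronwall closure but the input from Section \ref{sec:resolvent-convergence}: one must control $(H_\varepsilon-z)^{-1}$ in operator norm across the full range $\sigma\in[0,3]$, proving triviality of the limit in all sub-cases of (i) and the emergence of $-\Delta_\alpha$ in (ii). The delicate point is exactly $\sigma=2$, where the Birman--Schwinger kernel $-u(-\Delta)^{-1}v$ has $-1$ in its spectrum precisely under \eqref{eq:ZERcondphi}: the non-resonant hypothesis in (i) guarantees that $I+u(-\Delta)^{-1}v$ remains invertible after rescaling and yields the trivial limit, whereas under \eqref{eq:ZERcondphi}--\eqref{eq:ZERcondphi2} with the normalisation \eqref{eq:phinormalisation} the resonance and the factor $\eta'(0)$ conspire through a Taylor expansion of $\eta$ to produce exactly the point-interaction resolvent with coupling $\alpha=-\eta'(0)/|\!\int V\psi\,\ud x|^2$. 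Once these resolvent facts are in hand, the Duhamel--Gronwall scheme above delivers \eqref{eq:mainthmlimit1} and \eqref{eq:mainthmlimit2} uniformly on $[0,\mathsf{T}]$.
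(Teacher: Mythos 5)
Your proposal is correct and follows essentially the same route as the paper: strong resolvent convergence from Section \ref{sec:resolvent-convergence} converted via Trotter's theorem into uniform-in-time strong convergence of the propagators, then a Duhamel decomposition into the same three terms (initial-datum term, propagator-difference term on the limit nonlinearity, and nonlinearity-difference term) with the Young/H\"older Lipschitz bound and mass conservation. The only cosmetic difference is that you close the estimate with Gronwall's inequality on all of $[0,\mathsf{T}]$ at once, whereas the paper absorbs the nonlinear term on a short interval $T<(3\|w\|_{L^\infty}\|a\|_{L^2}^2)^{-1}$ and iterates over finitely many subintervals; both closures are standard and equivalent here.
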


  Theorem \ref{thm:main1} conveys multiple information, in particular on the comparison between the distinguished regimes $\sigma=2$ with resonance, and $\sigma=3$.
  
  The end-point regime $\sigma=3$ is relevant in the framework of the Colombeau generalised solution theory for non-linear Schr\"{o}dinger equations (Appendix \ref{app:Colombeau}). 
%
 The control of such a regime can be deduced as a corollary from Theorem \ref{thm:main1} and combined with the Colombeau solution theory for \eqref{eq:LS-NLS-Delta-delta}, a non-linear Schr\"{o}dinger equation with distributional coefficient in its linear part.
 In particular, Theorem \ref{thm:main1} can be applied to deduce \emph{compatibility}, in the precise meaning of the Colombeau solution theory (see Appendix \ref{app:Colombeau} for details), between the Colombeau solution to \eqref{eq:LS-NLS-Delta-delta}, namely the equivalence class of nets of the form $(u_\varepsilon)_{\varepsilon\in(0,1]}$ modulo negligible nets (in the sense of \eqref{eq:Gquotient}) and the solution $u$ to the non-linear Schr\"{o}dinger equation \eqref{eq:LS-NLS-Delta}, in which the linear part is simply given by the free Laplacian. For the relevance of this case within the generalised solution theory of non-linear PDEs, it is instructive to cast it in a separate theorem.

 \begin{theorem}\label{thm:Colombeau-case}
  Let $a\in L^2(\mathbb{R}^3)$ and let $w$ be a function satisfying \eqref{eq:conditions-w}. The Colombeau generalised solution to the Cauchy problem for the singular non-linear Schr\"{o}dinger equation
   \begin{equation}\label{eq:deltaeq}
     \ii\partial_t u\;=\;-\Delta u + \delta u +  (w*|u|^2)u
   \end{equation}
   with initial datum $a$ is compatible with the $H^2$-solution to the Cauchy problem for the classical non-linear Schr\"{o}dinger equation
      \begin{equation}\label{eq:nodeltaeq}
     \ii\partial_t u\;=\;-\Delta u + (w*|u|^2)u
   \end{equation}
  with the same initial datum.
%
%
 \end{theorem}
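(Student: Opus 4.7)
The plan is to reduce Theorem \ref{thm:Colombeau-case} to Theorem \ref{thm:main1}(i) by correctly identifying the Colombeau embedding of $\delta$ with the special case $\sigma=3$ of the scaling \eqref{eq:general_scaling}, and then translating the $L^\infty_t L^2_x$ convergence of representatives into the Colombeau compatibility relation. More precisely, I would first recall from Appendix \ref{app:Colombeau} that the embedding of $\delta \in \mathcal{D}'(\mathbb{R}^3)$ into the Colombeau algebra is performed via convolution with a mollifier net $\rho_\varepsilon(x) = \varepsilon^{-3}\rho(x/\varepsilon)$, where $\rho$ can be taken in $\mathcal{S}(\mathbb{R}^3)$ (or $C^\infty_c(\mathbb{R}^3)$), non-negative, and with $\int_{\mathbb{R}^3}\rho\,\mathrm{d}x=1$. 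Setting $V:=\rho$, $\eta\equiv 1$, and $\sigma:=3$, the regularised potential $V_\varepsilon$ in \eqref{eq:general_scaling} coincides exactly with the representative of $\delta$ entering the Colombeau solution theory for \eqref{eq:deltaeq}.

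Second, I would let $u_\varepsilon \in C(\mathbb{R}, L^2(\mathbb{R}^3))$ be the unique strong solution to the regularised equation \eqref{eq:LS-NLS-eps} with $V_\varepsilon$ as above and initial datum $a$, whose existence and uniqueness are guaranteed by Theorem \ref{thm:well-posedness} (the chosen $V$ is Schwartz, hence satisfies \eqref{eq:conditions-V-Rollnik}, \eqref{eq:conditions-V-L1}, and \eqref{eq:conditions-V-L32p-Linf}). The net $(u_\varepsilon)_{\varepsilon\in(0,1]}$ is a representative of the Colombeau generalised solution to \eqref{eq:deltaeq}; verifying its moderateness in the appropriate Colombeau sense is to be done exactly as in the cited references on Colombeau solution theory for Schr\"odinger-type equations. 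Since $\sigma=3\neq 2$, no resonance condition is needed, and since $V=\rho\geqslant 0$, the positivity hypothesis of Theorem \ref{thm:main1}(i) in the regime $\sigma\in(2,3]$ is met. Applying Theorem \ref{thm:main1}(i) then yields, for every $\mathsf{T}>0$,
\begin{equation*}
 \lim_{\varepsilon\downarrow 0}\|u_\varepsilon - u\|_{L^\infty([0,\mathsf{T}],L^2(\mathbb{R}^3))}\;=\;0\,,
\end{equation*}
where $u\in C(\mathbb{R}, L^2(\mathbb{R}^3))$ is the unique strong solution to \eqref{eq:nodeltaeq} with initial datum $a$.

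The final step is to interpret this convergence through the lens of the Colombeau compatibility notion reviewed in Appendix \ref{app:Colombeau}: a Colombeau generalised solution is compatible with a classical (distributional) solution precisely when some representative net converges to the classical solution in $\mathcal{D}'$. Since convergence in $L^\infty([0,\mathsf{T}],L^2(\mathbb{R}^3))$ trivially implies convergence in $\mathcal{D}'([0,\mathsf{T}]\times\mathbb{R}^3)$, the compatibility with the solution to \eqref{eq:nodeltaeq} follows at once. The main conceptual step, and the only delicate one, is thus the bookkeeping that matches the Colombeau data (the mollifier representative of $\delta$, and the moderate net of smooth representatives of the solution) with the hypotheses of Theorem \ref{thm:main1}(i); once this dictionary is in place, the dynamical information is entirely carried by Theorem \ref{thm:main1}, and no further estimate is needed.
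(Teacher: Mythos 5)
Your overall strategy --- specialise the scaling \eqref{eq:general_scaling} to $\sigma=3$, $\eta\equiv 1$, $V=$ the mollifier, and invoke Theorem \ref{thm:main1}(i) --- is exactly the paper's route, but two points need repair. First, the representative net of the Colombeau generalised solution to \eqref{eq:deltaeq} does not solve the regularised problem with initial datum $a$: by the embedding \eqref{eq:colemb} it solves it with the mollified datum $a_\varepsilon=a*\rho_\varepsilon$ (modulo a negligible net). Theorem \ref{thm:main1} as stated compares two evolutions issuing from the \emph{same} fixed datum $a$, so it cannot be applied verbatim; one must rerun the Duhamel/bootstrap argument with the additional term $e^{-\ii t H_\varepsilon}a_\varepsilon-e^{\ii t\Delta}a$, which the paper controls via $\|a_\varepsilon-a\|_{L^2}\to 0$ combined with the strong convergence of the propagators. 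This is a short but genuinely missing step in your write-up.

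Second, you only require the mollifier to be non-negative, and you even allow it to be in $C^\infty_c(\mathbb{R}^3)$. The mechanism behind the $\sigma\in(2,3]$ regime (Proposition \ref{prop:Fepslimit}(iii)) requires that $0$ not be an eigenvalue of $B=v\big(2(-\Delta)+\mathbbm{1}\big)^{-1}v$, which forces $V>0$ almost everywhere: for a compactly supported or merely non-negative mollifier the multiplication operator $v$ has a nontrivial kernel, the spectral weight $\mu_f^{(B)}(\{0\})$ need not vanish, and since $\varepsilon^{3-\sigma}=1$ at $\sigma=3$ the strong limit $\mathcal{F}_\varepsilon^{1/2}\to\mathbb{O}$ fails. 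This is precisely why the Remark following Theorem \ref{thm:Colombeau-case} insists that the mollifiers be \emph{strictly positive} almost everywhere (hence Schwartz rather than compactly supported), and why the paper's proof takes $\phi\in\mathcal{S}(\mathbb{R}^3)$ with $\phi>0$ a.e. Finally, a terminological point: compatibility in the sense of \eqref{eq:compatibility} \emph{is} the $L^\infty([0,\mathsf{T}],L^2)$ convergence you obtain, so your last step of weakening to $\mathcal{D}'$ convergence is both unnecessary and a mischaracterisation --- convergence in $\mathcal{D}'$ is the strictly weaker notion of \emph{association}.
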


%

\begin{remark}
 In \cite{Dugandzija-Vojnovic-2021} existence and uniqueness of a $H^2$-based Colombeau solution of $\eqref{eq:deltaeq}$ was established with slightly different conditions on the convolution potential $w$ (more precisely, for even-symmetric $w\in W^{2,p}(\mathbb{R}^3,\mathbb{R})$, $p\in(2,\infty]$). In addition compatibility was examined, but only for odd-symmetric $a\in H^2(\mathbb{R}^3)$ an in the case of classical equation $\eqref{eq:nodeltaeq}$. Besides, the embedding 
 (in the sense of Appendix \ref{app:Colombeau} and \eqref{eq:colemb} in particular) of the $\delta$-coefficient appearing in \eqref{eq:deltaeq} into the Colombeau algebra was performed in \cite{Dugandzija-Vojnovic-2021} by means of \emph{compactly supported} mollifiers, whereas, without altering the general picture of the Colombeau generalised solution theory, our Theorem \ref{thm:Colombeau-case} requires the mollifiers to be \emph{strictly positive} almost everywhere. In this respect, Theorem \ref{thm:Colombeau-case} supplements the recent analysis made in \cite{Dugandzija-Vojnovic-2021} for the Hartree equation, as well as the related works \cite{Dugandzija-Nedeljkov-2019,DV-2022-INdAMSpringer} for the cubic non-linear Schr\"{o}dinger equation.
\end{remark}

 All \emph{sub-Colombeau} regimes $\sigma\in[0,3)$, but the resonant $\sigma=2$ case, display the same qualitative `compatibility' behaviour expressed by \eqref{eq:mainthmlimit1}. Thus, notably, the perturbation $V_\varepsilon$ in \eqref{eq:LS-NLS-eps}, localised at the very small spatial scale $\varepsilon$ and with very high magnitude $\varepsilon^{-\sigma}$, is practically ineffective for the solution, in the $L^2$-sense. The only non-trivial effect, well known in the quantum mechanical context, is produced by the perturbation  when $\sigma=2$, in the exceptional resonant case, namely under exceptional spectral requirements at zero energy. In this sense, one virtue of Theorem \ref{thm:main1}, which was in fact a primary motivation for us, is the clarification of the different quantitative meaning (depending on the different scaling) of linear and semi-linear Schr\"{o}dinger equations with `approximated point-like singularity'.

 As for the \emph{super-Colombeau} regimes $\sigma>3$, our scheme to establish Theorem \ref{thm:res-convergence}, which is the actual technical basis Theorem \ref{thm:main1} is built upon, irremediably breaks down. In Remark \ref{rem:sigma-above3} we elaborate on the strategy that is naturally expected to be needed in order to analyse those regimes.

 We should like to conclude this presentation of the main results by mentioning another potential virtue of an analysis of this kind, or at least of its spirit. Precisely because the limiting behaviour of solutions to non-linear (Schr\"{o}dinger) equations with approximate point-like perturbations appears to be a relevant information in several contexts, also quite independent in their motivations, it is of interest to investigate the general issue of how the \emph{dispersive properties} of the linear propagator behave as $\varepsilon\downarrow 0$. The latter are, as well known, a fundamental tool for the solution theory at fixed $\varepsilon>0$, and a control of them for each $\varepsilon$ would allow to export \emph{after} the limit $\varepsilon\downarrow 0$ the knowledge concerning the dispersive and scattering properties, the growth of Sobolev norms, etc., of the approximate solutions. In practice this would require controls of limits such as \eqref{eq:mainthmlimit1}-\eqref{eq:mainthmlimit2}, but in higher regularity norms and weighted norms. A very recent discussion of this essentially uncharted problem, with a few preliminary answers, is provided in \cite{GMS2022_epsilon-dispersive}.

  \section{Resolvent convergence}\label{sec:resolvent-convergence}
  
  In this Section we shall establish the main technical result of the present work (Theorem \ref{thm:res-convergence}). It provides distinct resolvent convergences in the two regimes of our main theorem (Theorem \ref{thm:main1}).

  \begin{theorem}\label{thm:res-convergence}
   For each $\varepsilon\in(0,1]$ let $H_\varepsilon:=-\Delta+V_\varepsilon$ be the lower semi-bounded self-adjoint operator in $L^2(\mathbb{R}^3)$ defined as form sum with (energy) quadratic form domain $H^1(\mathbb{R}^3)$, where $V_\varepsilon$ is defined by \eqref{eq:general_scaling} for a given $V\in\mathcal{R}\cap L^1(\mathbb{R}^3)$, a given $\sigma>0$, and a given smooth $\eta:[0,1]\to[0,+\infty)$ satisfying $\eta(0)=\eta(1)=1$. 
   \begin{enumerate}
    \item[(i)] Let $\sigma\in[0,3]$. In addition, when $\sigma=2$ assume that $-1$ is \emph{not} an eigenvalue of $u(-\Delta)^{-1}v$, and when $\sigma\in(2,3]$ assume that $V>0$ almost everywhere. Then
    \begin{equation}\label{eq:resolventlimit1}
      H_\varepsilon\;\xrightarrow{\;\varepsilon\downarrow 0 \;} \;-\Delta
    \end{equation}
    in the \emph{norm resolvent sense} when $\sigma\in[0,2]$ and in \emph{strong resolvent sense} when $\sigma\in(2,3]$, where $-\Delta$ above is the self-adjoint negative Laplacian in $L^2(\mathbb{R}^3)$ with domain $H^2(\mathbb{R}^3)$.
    \item[(ii)] Let $\sigma=2$ and moreover assume that $(1+|\cdot|)V\in L^1(\mathbb{R}^3)$ and that $V$ satisfies the resonance condition \eqref{eq:ZERcondphi}-\eqref{eq:ZERcondphi2}. Let $\psi\in L^2_{\mathrm{loc}}(\mathbb{R}^3)\setminus L^2(\mathbb{R}^3)$ be the corresponding zero-energy resonance function \eqref{eq:psi-resonance} with normalisation \eqref{eq:phinormalisation}, and set 
         \begin{equation}\label{eq:thm2-def-alpha}
      \alpha\;:=\;-\frac{\eta'(0)}{\big|\int_{\mathbb{R}^3} V\psi\,\ud x\,\big|^2}\,.
     \end{equation}
    Then 
        \begin{equation}\label{eq:resolventlimit2}
      H_\varepsilon\;\xrightarrow{\;\varepsilon\downarrow 0 \;} \;-\Delta_\alpha
    \end{equation}
    in the \emph{norm resolvent sense}, where $-\Delta_\alpha$ is the self-adjoint Hamiltonian of point interaction in $L^2(\mathbb{R}^3)$ supported at the origin and with $s$-wave scattering length $-(4\pi\alpha)^{-1}$, whose definition is recalled in \eqref{eq:defDelta-alpha}.
   \end{enumerate}
  \end{theorem}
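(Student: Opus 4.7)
My plan is to derive both convergences from a Birman--Schwinger representation combined with an explicit rescaling. Writing $v_\varepsilon := \sqrt{|V_\varepsilon|}$ and $u_\varepsilon := v_\varepsilon\,\mathrm{sign}(V_\varepsilon)$, the second resolvent identity produces
\[
 (H_\varepsilon - z)^{-1} \;=\; (-\Delta - z)^{-1} \,-\, (-\Delta - z)^{-1} v_\varepsilon \,[\,\mathbf{1} + K_\varepsilon(z)\,]^{-1}\, u_\varepsilon (-\Delta - z)^{-1}\,,
\]
with Birman--Schwinger operator $K_\varepsilon(z) := u_\varepsilon (-\Delta - z)^{-1} v_\varepsilon$; each factor is Hilbert--Schmidt by the Rollnik hypothesis on $V$. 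Conjugating by the $L^2$-unitary dilation $(D_\varepsilon f)(x) := \varepsilon^{3/2} f(\varepsilon x)$ and using the closed form $G_z(x) = (4\pi|x|)^{-1}\,e^{\ii\sqrt{z}|x|}$, a change of variables collects all $\varepsilon$-dependence into a single prefactor,
\[
 D_\varepsilon K_\varepsilon(z)\, D_\varepsilon^{-1} \;=\; \eta(\varepsilon)\,\varepsilon^{2-\sigma}\,\widehat{K}(\varepsilon^2 z)\,,\qquad \widehat{K}(w) \;:=\; u\,(-\Delta - w)^{-1}\,v\,,
\]
and a parallel computation yields $\|(-\Delta - z)^{-1} v_\varepsilon\|_{\mathrm{HS}}^2 = O\bigl(\eta(\varepsilon)\,\varepsilon^{3-\sigma}\bigr)$ together with a rank-one leading form $(-\Delta - z)^{-1} v_\varepsilon\, D_\varepsilon^{-1} \sim \sqrt{\eta(\varepsilon)\varepsilon^{3-\sigma}}\;|G_z\rangle\langle v|$ as $\varepsilon\downarrow 0$.

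For Part~(i) I would split into three subregimes. When $\sigma \in [0,2)$, the prefactor $\varepsilon^{2-\sigma}$ vanishes, so $K_\varepsilon(z) \to \mathbf{0}$ in operator norm, $[\mathbf{1}+K_\varepsilon(z)]^{-1} \to \mathbf{1}$, and the outer Hilbert--Schmidt decay forces the resolvent difference to zero in norm. The non-resonant case $\sigma = 2$ is identical: the hypothesis that $-1 \notin \sigma(\widehat{K}(0))$ ensures $\mathbf{1} + \widehat{K}(0)$ is invertible in $L^2$, so $[\mathbf{1}+K_\varepsilon(z)]^{-1}$ is uniformly bounded for small $\varepsilon$, again giving norm convergence. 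For $\sigma \in (2,3]$ with $V \geq 0$, one has $u_\varepsilon = v_\varepsilon$ and $K_\varepsilon(-\kappa^2) \geq 0$ for $\kappa$ large, hence the automatic bound $\|[\mathbf{1}+K_\varepsilon(-\kappa^2)]^{-1}\| \leq 1$ uniformly despite the divergence of $K_\varepsilon$ itself; combining this with $\|v_\varepsilon(-\Delta + \kappa^2)^{-1} f\|_{L^2}^2 \leq \|V_\varepsilon\|_{L^1}\,\|(-\Delta + \kappa^2)^{-1} f\|_{L^\infty}^2$ (from the Sobolev embedding $H^2 \hookrightarrow L^\infty$ in $\mathbb{R}^3$) and $\|V_\varepsilon\|_{L^1} = \eta(\varepsilon)\varepsilon^{3-\sigma}\|V\|_{L^1}$ bounded for $\sigma \leq 3$ produces \emph{strong} resolvent convergence to $-\Delta$ on a dense subset of $L^2$.

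Part~(ii) is the genuine obstacle, since now $\widehat{K}(0)$ has $-1$ as a simple eigenvalue with eigenvector $\phi$, so $\mathbf{1} + \widehat{K}(0)$ is non-invertible and the direct Neumann argument breaks down. My strategy is a Feshbach--Schur reduction on the orthogonal decomposition $L^2(\mathbb{R}^3) = \mathbb{C}\phi \oplus \phi^\perp$. The Green-function expansion $G_{\varepsilon^2 z}(x) = G_0(x) + \ii\varepsilon\sqrt{z}/(4\pi) + O(\varepsilon^2|x|)$ together with $\eta(\varepsilon) = 1 + \varepsilon\,\eta'(0) + O(\varepsilon^2)$ yields
\[
 \mathbf{1} + K_\varepsilon(z) \;=\; \bigl[\mathbf{1} + \widehat{K}(0)\bigr] \,+\, \varepsilon\,M(z) \,+\, O(\varepsilon^2)\,,\qquad M(z) \;:=\; \eta'(0)\widehat{K}(0) \,+\, \tfrac{\ii\sqrt z}{4\pi}\,|u\rangle\langle v|\,,
\]
and the Schur complement then produces the singular expansion $[\mathbf{1}+K_\varepsilon(z)]^{-1} = \varepsilon^{-1}\langle\phi,M(z)\phi\rangle^{-1}|\phi\rangle\langle\phi| + O(1)$. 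Using $\widehat{K}(0)\phi = -\phi$, the normalisation \eqref{eq:phinormalisation}, and the identity $\int V\psi\,\ud x = -\int v\phi\,\ud x$ implied by $\psi = (-\Delta)^{-1}v\phi$, the denominator reduces to $-\eta'(0) - \tfrac{\ii\sqrt z}{4\pi}\bigl|\int V\psi\,\ud x\bigr|^2$. Sandwiching between the outer factors, each of which contributes a $\sqrt{\varepsilon}$, the two square roots of $\varepsilon$ absorb the singular $\varepsilon^{-1}$ exactly, leaving a finite rank-one correction that I would identify as precisely the Krein resolvent formula \eqref{eq:defDelta-alpha} for $-\Delta_\alpha$ with $\alpha = -\eta'(0)/\bigl|\int V\psi\,\ud x\bigr|^2$. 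The additional assumption $(1 + |\cdot|)V \in L^1(\mathbb{R}^3)$ is precisely what is needed to justify the second-order Taylor remainder of $G_{\varepsilon^2 z}$ uniformly against the weight $uv$, producing \emph{norm} (rather than merely strong) resolvent convergence.
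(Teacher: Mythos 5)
Your strategy coincides with the paper's: the Konno--Kuroda/Birman--Schwinger factorisation of the resolvent difference, conjugated by the dilation unitary so that all the $\varepsilon$-dependence collects into the prefactors $\eta(\varepsilon)\varepsilon^{2-\sigma}$ (middle factor) and $\eta(\varepsilon)\varepsilon^{3-\sigma}$ (outer factors). Your treatment of $\sigma\in[0,2)$ and of the non-resonant $\sigma=2$ case is correct and is exactly the paper's argument, and your Feshbach--Schur sketch of the resonant case is the classical Albeverio--H{\o}egh-Krohn computation that the paper simply cites. One caveat there: since $\widehat{K}(0)=u(-\Delta)^{-1}v$ is not self-adjoint for sign-indefinite $V$, the singular part of $[\mathbf{1}+K_\varepsilon(z)]^{-1}$ involves the \emph{left} eigenvector at $-1$, which is $\mathrm{sign}(V)\phi$ rather than $\phi$; the correct rank-one term is $|\phi\rangle\langle \mathrm{sign}(V)\phi|$ divided by $\langle \mathrm{sign}(V)\phi, M(z)\phi\rangle$, which with the normalisation \eqref{eq:phinormalisation} gives the denominator $\eta'(0)-\frac{\sqrt{\lambda}}{4\pi}\bigl(\int_{\mathbb{R}^3}v\phi\,\ud x\bigr)^2$ as in the paper's Proposition \ref{prop:Fepslimit}(ii).

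The genuine gap is at the endpoint $\sigma=3$ of part (i). For $\sigma\in(2,3)$ your estimate in fact yields \emph{norm} convergence, since each outer Hilbert--Schmidt norm carries a factor $\sqrt{\eta(\varepsilon)\varepsilon^{3-\sigma}}\to 0$ while the middle inverse is bounded by $1$. But at $\sigma=3$ all three factors are merely $O(1)$: $\|V_\varepsilon\|_{L^1}=\eta(\varepsilon)\|V\|_{L^1}$ does not vanish, so your chain of bounds gives only uniform boundedness of the resolvent difference, which does not imply convergence to zero --- not even on a dense subset. What must be shown is that the middle factor, which after rescaling reads $\bigl(\mathbf{1}+\varepsilon^{-1}\eta(\varepsilon)\,v(-\Delta+\varepsilon^2\lambda\mathbbm{1})^{-1}v\bigr)^{-1}$, tends to zero \emph{strongly}; its strong behaviour is governed by the spectral projection onto the kernel of $v(-\Delta)^{-1}v$ (more precisely, of a positive comparison operator $B=v(2(-\Delta)+\mathbbm{1})^{-1}v$ with $B\leqslant v(-\Delta+\varepsilon^2\lambda\mathbbm{1})^{-1}v$), so one must prove that this kernel is trivial. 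That is exactly where the hypothesis $V>0$ \emph{almost everywhere} --- which your argument never uses beyond $V\geqslant 0$ --- enters: the paper shows $\mu_f^{(B)}(\{0\})=0$ because $Bg=0$ forces $g=0$ when $V>0$ a.e., and then concludes by dominated convergence in the spectral representation of $B$. If $V\geqslant 0$ merely, $v$ vanishes on a set of positive measure, $B$ has a nontrivial kernel, and the bound $\|[\mathbf{1}+K_\varepsilon]^{-1}\|\leqslant 1$ alone cannot close the argument. You need to supply this spectral step (or an equivalent) to cover $\sigma=3$.
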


  Theory and properties of the resolvent convergence are discussed, e.g., in \cite[Section VIII.7]{rs1}.
  In practice the limits \eqref{eq:resolventlimit1}-\eqref{eq:resolventlimit2} have the form $(H_\varepsilon+\lambda\mathbbm{1})^{-1}\to(-\Delta+\lambda\mathbbm{1})^{-1}$ or $(H_\varepsilon+\lambda\mathbbm{1})^{-1}\to(-\Delta_\alpha+\lambda\mathbbm{1})^{-1}$ in the strong or norm Hilbert space operator topology for all $\lambda\in\mathbb{C}\setminus\mathbb{R}$, as well as for all $\lambda>0$ with $-\lambda\notin\sigma(H_\varepsilon)$ for each (sufficiently small) $\varepsilon$.

  The case $\sigma=2$ is already well known in the literature and it is included in Theorem \ref{thm:res-convergence} for completeness of presentation. It is central in quantum mechanics, in the context of approximation of the point interaction Hamiltonian by means of Schr\"{o}dinger operators with regular potentials on a shorter and shorter spatial scale (see, e.g., \cite[Section I.1.2]{albeverio-solvable}). Notably, scaling $V_\varepsilon$ as in \eqref{eq:general_scaling} with other powers below and above the threshold $\sigma=2$ seems to have only attracted marginal attention in that context (as opposed to the \emph{one-dimensional} case, for which a number of studies have been made on scaling regimes that are more singular than the scaling yielding the one-dimensional point interaction: \cite{Seba1985-halfline,Seba1986-deltaprime,Seba1986-regularised1D,Golovaty-IEOT2013,DM-2015-halfline,Golovaty-IEOT2018}). Certain meanings for quantum systems of particles have been recently attributed to the three-dimensional scaling with $\sigma=3$ in \cite{DellAntonio-contact-2019}. 
  In a related context of Maxwell-Lorentz systems with dipole approximation, an analogous version of the limit in the $\sigma=3$ case of Theorem \ref{thm:res-convergence} above was previously studied in \cite{Noja-Posilicano_AHP1998,Noja-Posilicano_AHP1999} in the \emph{special case} where the charge density of the electron $\rho_\varepsilon(x)$ (the analogue of $V_\varepsilon$ here) is implemented as a \emph{separable potential} $|\rho_\varepsilon\rangle\langle\rho_\varepsilon|$, a caricature model that makes all computations more explicit and accessible: scaling $\rho_\varepsilon(x)$ indeed as $\varepsilon^{-3}\rho(x/\varepsilon)$ is shown to produce a trivial limit unless a suitable regularisation (of the electron's mass) is performed.

  The preparation of the proof of Theorem \ref{thm:res-convergence} requires Propositions \ref{prop:AFC}, \ref{prop:AClimits}, and \ref{prop:Fepslimit} below.   
  For $\lambda>0$ we shall set (see \eqref{eq:GlambdaApp})
  \begin{equation}\label{eq:Glambda}
  G_\lambda(x)\;:=\;\frac{\,e^{-|x|\sqrt{\lambda}}\,}{4\pi|x|}\,.
 \end{equation}
  Recall also the short hand $|f\rangle\langle g|$, given any two $f,g\in L^2(\mathbb{R}^3)$, for the $L^2\to L^2$ rank one operator $h\mapsto(\int_{\mathbb{R}^3}\overline{g} h)f$.

  \begin{proposition}\label{prop:AFC}
   For each $\varepsilon\in(0,1]$ let $H_\varepsilon:=-\Delta+V_\varepsilon$ be the lower semi-bounded self-adjoint operator in $L^2(\mathbb{R}^3)$ defined as form sum with (energy) quadratic form domain $H^1(\mathbb{R}^3)$, where $V_\varepsilon$ is defined by \eqref{eq:general_scaling} for a given $V\in\mathcal{R}\cap L^1(\mathbb{R}^3)$, a given $\sigma\geqslant 0$, and a given smooth $\eta:[0,1]\to[0,+\infty)$ satisfying $\eta(0)=\eta(1)=1$.
   Let $\lambda>0$ with $-\lambda\notin\sigma(H_\varepsilon)$. Then,
   \begin{equation}\label{eq:AFCeps}
    (H_\varepsilon+\lambda\mathbbm{1})^{-1}-(-\Delta+\lambda\mathbbm{1})^{-1}\;=\;-A_\varepsilon^{(\lambda)} F_\varepsilon^{(\lambda)} C_\varepsilon^{(\lambda)}
   \end{equation}
   as an identity between everywhere defined and bounded operators on $L^2(\mathbb{R}^3)$, where $A_\varepsilon^{(\lambda)}$ and $C_\varepsilon^{(\lambda)}$ are the Hilbert-Schmidt operators with integral kernel, respectively,
   \begin{equation}\label{eq:defAepsCeps}
    \begin{split}
     A_\varepsilon^{(\lambda)}(x,y)\;&:=\;G_\lambda(x-\varepsilon y)\sqrt{|V(y)|}\,, \\
     C_\varepsilon^{(\lambda)}(x,y)\;&:=\;\mathrm{sign}(V(x))\sqrt{|V(x)|}\,G_\lambda(\varepsilon x- y)\,,
    \end{split}
   \end{equation}
   and 
   \begin{equation}\label{eq:defFeps}
    F_\varepsilon^{(\lambda)}\;:=\;\varepsilon\,\eta(\varepsilon)\Big(\varepsilon^{\sigma-2}\mathbbm{1}+\eta(\varepsilon)\,\mathrm{sign}(V)\sqrt{|V|}(-\Delta+\varepsilon^2\lambda\mathbbm{1})^{-1}\sqrt{|V|}\Big)^{-1}\,.
   \end{equation}
  \end{proposition}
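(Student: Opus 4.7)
The plan combines two classical ingredients: the Konno-Kuroda (Birman-Schwinger) factorisation of the second resolvent identity, and the unitary dilation that transfers the shrinking potential $V_\varepsilon$ onto the fixed-scale $V$ at the price of redistributing factors of $\varepsilon$ and $\eta(\varepsilon)$. Write $V_\varepsilon=u_\varepsilon v_\varepsilon$ with $v_\varepsilon:=\sqrt{|V_\varepsilon|}$ and $u_\varepsilon:=\mathrm{sign}(V_\varepsilon)\,v_\varepsilon$. Since $V\in\mathcal{R}\cap L^1(\mathbb{R}^3)$ and $\eta(\varepsilon)\geqslant 0$, also $V_\varepsilon$ is Rollnik, so $u_\varepsilon(-\Delta+\lambda\mathbbm{1})^{-1}v_\varepsilon$ is Hilbert-Schmidt on $L^2(\mathbb{R}^3)$ and the hypothesis $-\lambda\notin\sigma(H_\varepsilon)$ is equivalent, by the Birman-Schwinger principle, to the invertibility of $\mathbbm{1}+u_\varepsilon(-\Delta+\lambda\mathbbm{1})^{-1}v_\varepsilon$. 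The Konno-Kuroda identity then reads, as a bounded-operator identity,
\begin{equation*}
 (H_\varepsilon+\lambda\mathbbm{1})^{-1}-(-\Delta+\lambda\mathbbm{1})^{-1}\;=\;-R\,v_\varepsilon\bigl(\mathbbm{1}+u_\varepsilon R\,v_\varepsilon\bigr)^{-1}u_\varepsilon R,\qquad R:=(-\Delta+\lambda\mathbbm{1})^{-1}.
\end{equation*}

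Next, introduce the unitary dilation $(U_\varepsilon f)(x):=\varepsilon^{3/2}f(\varepsilon x)$ on $L^2(\mathbb{R}^3)$ and insert $U_\varepsilon^{*}U_\varepsilon=\mathbbm{1}$ around the central inverse. Direct changes of variables give
\begin{equation*}
 R\,v_\varepsilon\,U_\varepsilon^{*}\,=\,\varepsilon^{(3-\sigma)/2}\sqrt{\eta(\varepsilon)}\,A_\varepsilon^{(\lambda)},\qquad U_\varepsilon\,u_\varepsilon\,R\,=\,\varepsilon^{(3-\sigma)/2}\sqrt{\eta(\varepsilon)}\,C_\varepsilon^{(\lambda)},
\end{equation*}
with kernels precisely as in \eqref{eq:defAepsCeps}; their Hilbert-Schmidt character is immediate from $\|A_\varepsilon^{(\lambda)}\|_{HS}^{2}=\|G_\lambda\|_{L^{2}}^{2}\|V\|_{L^{1}}$ (translation invariance of Lebesgue measure), and symmetrically for $C_\varepsilon^{(\lambda)}$, both finite because $G_\lambda\in L^{2}(\mathbb{R}^3)$ for $\lambda>0$ and $V\in L^1$. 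For the central factor, the conjugations $U_\varepsilon R U_\varepsilon^{*}=\varepsilon^{2}(-\Delta+\varepsilon^{2}\lambda\mathbbm{1})^{-1}$, $U_\varepsilon v_\varepsilon U_\varepsilon^{*}=\varepsilon^{-\sigma/2}\sqrt{\eta(\varepsilon)}\,v$, $U_\varepsilon u_\varepsilon U_\varepsilon^{*}=\varepsilon^{-\sigma/2}\sqrt{\eta(\varepsilon)}\,u$ yield
\begin{equation*}
 U_\varepsilon\bigl(\mathbbm{1}+u_\varepsilon R v_\varepsilon\bigr)^{-1}U_\varepsilon^{*}\,=\,\bigl(\mathbbm{1}+\tfrac{\eta(\varepsilon)}{\varepsilon^{\sigma-2}}u(-\Delta+\varepsilon^{2}\lambda\mathbbm{1})^{-1}v\bigr)^{-1}\,=\,\frac{\varepsilon^{\sigma-3}}{\eta(\varepsilon)}\,F_\varepsilon^{(\lambda)},
\end{equation*}
the last step being the algebraic identity $(\mathbbm{1}+\tfrac{\eta(\varepsilon)}{\varepsilon^{\sigma-2}}K)^{-1}=\varepsilon^{\sigma-2}(\varepsilon^{\sigma-2}\mathbbm{1}+\eta(\varepsilon) K)^{-1}$ (with $K:=u(-\Delta+\varepsilon^{2}\lambda\mathbbm{1})^{-1}v$), matched against definition \eqref{eq:defFeps}.

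Multiplying the three scalar prefactors gives $\varepsilon^{(3-\sigma)/2}\sqrt{\eta(\varepsilon)}\cdot\varepsilon^{\sigma-3}\eta(\varepsilon)^{-1}\cdot\varepsilon^{(3-\sigma)/2}\sqrt{\eta(\varepsilon)}=\varepsilon^{0}=1$, and \eqref{eq:AFCeps} follows. The proof is essentially mechanical once the Konno-Kuroda identity and the unitary dilation are on the table; the only point that requires vigilance is the bookkeeping of $\varepsilon$- and $\eta(\varepsilon)$-powers in the algebraic repackaging of the Birman-Schwinger inverse into the prescribed form \eqref{eq:defFeps}, where the precise scalar $\varepsilon^{\sigma-3}\eta(\varepsilon)^{-1}$ is exactly what is needed to cancel the $\varepsilon^{(3-\sigma)/2}\sqrt{\eta(\varepsilon)}$ contributions from each of the two flanking scaled factors.
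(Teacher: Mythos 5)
Your proposal is correct and follows essentially the same route as the paper: the Konno--Kuroda resolvent factorisation combined with conjugation by the unitary dilation, with the same bookkeeping of the $\varepsilon$- and $\eta(\varepsilon)$-prefactors and the same Hilbert--Schmidt norm computation (note $\|G_\lambda\|_{L^2}^2=(8\pi\sqrt{\lambda})^{-1}$, matching the paper's constant). The only cosmetic differences are that your $U_\varepsilon$ is the inverse of the paper's dilation and that the paper spells out the infinitesimal form-boundedness of $\sqrt{|V_\varepsilon|}$ relative to $(-\Delta)^{1/2}$ before invoking Konno--Kuroda, a verification you leave implicit in the form-sum hypothesis.
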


  \begin{proof}
   Set $v_\varepsilon:=\sqrt{|V_\varepsilon|}$ and $u_\varepsilon:=\mathrm{sign}(V_\varepsilon)\sqrt{|V_\varepsilon|}$, as well as $v:=v_1$ and $u:=u_1$. Clearly, $u_\varepsilon v_\varepsilon=V_\varepsilon$ and $uv=V$. As anticipated when \eqref{eq:ZERcondphi} was formulated, the operator $v_\varepsilon(-\Delta-k^2\mathbbm{1})^{-1}v_\varepsilon$ is compact on $L^2(\mathbb{R}^3)$ for any $k\in\mathbb{C}$ with $\mathfrak{Im}\,k>0$ and $k^2\notin\sigma(H_\varepsilon)$, with Hilbert-Schmidt norm
   \[
    \begin{split}
         \big\|v_\varepsilon(-\Delta-k^2\mathbbm{1})^{-1}v_\varepsilon\big\|_{\mathrm{H.S.}}^2\;&=\;\frac{1}{\:(4\pi)^2}\iint_{\mathbb{R}^3\times\mathbb{R}^3}\ud x\,\ud y\,\frac{\,|V_\varepsilon(x)|\,|V_\varepsilon(y)|\,}{|x-y|^2}\,e^{-2|x-y| \mathfrak{Im}\,k} \\
    &\leqslant\;\varepsilon^{4-2\sigma}\|V\|_{\mathcal{R}}^2\,.
    \end{split}
   \]
   As a consequence, the Hilbert-Schmidt-operator-valued map $k\mapsto v_\varepsilon(-\Delta-k^2\mathbbm{1})^{-1}v_\varepsilon$ is analytic on the half-plane $\mathfrak{Im}\,k>0$ and continuous for $\mathfrak{Im}\,k\geqslant 0$ (and $k^2\notin\sigma(H_\varepsilon)$), and by dominated convergence
   \[
    \big\|v_\varepsilon(-\Delta-k^2\mathbbm{1})^{-1}v_\varepsilon\big\|_{\mathrm{H.S.}}\;\xrightarrow[\;\mathfrak{Im}\,k>0\,,\; k^2\notin\sigma(H_\varepsilon)\;]{|k|\to +\infty}\;0\,.
   \]
  The latter limit implies that for arbitrary $\delta>0$ there is $\lambda>0$ large enough and hence $k=\ii\sqrt{\lambda}$ such that 
  \[
   \big\|(-\Delta-k^2\mathbbm{1})^{-\frac{1}{2}}\,|V_\varepsilon|\,(-\Delta-k^2\mathbbm{1})^{-\frac{1}{2}}\big\|_{\mathrm{H.S.}}\;=\;\big\|v_\varepsilon(-\Delta+\lambda\mathbbm{1})^{-1}v_\varepsilon\big\|_{\mathrm{H.S.}}\;\leqslant\;\delta\,,
  \]
  whence also
  \[
   |\langle\varphi,V_\varepsilon\varphi\rangle_{L^2}|\;\leqslant\;\delta\langle\varphi,(-\Delta)\varphi\rangle_{L^2}+a_\delta\|\varphi\|_{L^2}^2\qquad\forall\psi\in H^1(\mathbb{R}^3)
  \]
  for some $a_\delta>0$. This shows that the operator of multiplication by $\sqrt{|V_\varepsilon|}$, and therefore the operators of multiplication by $u_\varepsilon$ and by $v_\varepsilon$, are all infinitesimally small with respect to $(-\Delta)^{\frac{1}{2}}$. The latter condition, together with the compactness of 
   $u_\varepsilon(-\Delta-k^2\mathbbm{1})^{-1}v_\varepsilon$ when $\mathfrak{Im}\,k>0$ and $k^2\notin\sigma(H_\varepsilon)$, allow for the applicability of the Konno-Kuroda resolvent formula (see, e.g., \cite[Theorem B.1(b)]{albeverio-solvable}), thus yielding
   \[
    \begin{split}
     & (H_\varepsilon+\lambda\mathbbm{1})^{-1}-(-\Delta+\lambda\mathbbm{1})^{-1} \\
     & \qquad =\;-(-\Delta+\lambda\mathbbm{1})^{-1} v_\varepsilon\big(\mathbbm{1}+u_\varepsilon(-\Delta+\lambda\mathbbm{1})^{-1}v_\varepsilon\big)^{-1}u_\varepsilon(-\Delta+\lambda\mathbbm{1})^{-1}
    \end{split}
   \]
   for any $\lambda>0$ with $-\lambda\notin\sigma(H_\varepsilon)$.
  Inserting $\mathbbm{1}=U_\varepsilon U_\varepsilon^*$, where $U_\varepsilon$ is the unitary operator of dilation in $L^2(\mathbb{R}^3)$ defined by $(U_\varepsilon f)(x):=\varepsilon^{-3/2}f(x/\varepsilon)$, and exploiting \eqref{eq:general_scaling} and the scaling properties
  \[
   U_\varepsilon^*M(x)U_\varepsilon\,=\,M(\varepsilon x)\,,\qquad U_\varepsilon^*\Delta U_\varepsilon\,=\,\varepsilon^{-2}\Delta
  \]
  (meaning $M(x)$ and $M(\varepsilon x)$ above as multiplication operators), one finds
  \[
   \begin{split}
    & (-\Delta+\lambda\mathbbm{1})^{-1} v_\varepsilon (U_\varepsilon U_\varepsilon^*)\big(\mathbbm{1}+u_\varepsilon (-\Delta+\lambda\mathbbm{1})^{-1}v_\varepsilon\big)^{-1}u_\varepsilon(-\Delta+\lambda\mathbbm{1})^{-1} \\
    &=\;(\varepsilon^{3-\sigma}\eta(\varepsilon))^{-\frac{1}{2}}(-\Delta+\lambda\mathbbm{1})^{-1} v_\varepsilon \,U_\varepsilon \;\times \\
    &\qquad \times\; \varepsilon^{3-\sigma}\eta(\varepsilon)\big(\mathbbm{1}+\varepsilon^{2-\sigma}\eta(\varepsilon) u(-\Delta+\lambda\mathbbm{1})^{-1}v\big)^{-1} \times \\
    & \qquad \times\;U_\varepsilon^*u_\varepsilon(-\Delta+\lambda\mathbbm{1})^{-1}(\varepsilon^{3-\sigma}\eta(\varepsilon))^{-\frac{1}{2}}
   \end{split}
  \]
  (whenever the division by non-zero $\eta(\varepsilon)$ is possible).
  Owing to the definition \eqref{eq:defFeps}, 
  \[
    \varepsilon^{3-\sigma}\eta(\varepsilon)\big(\mathbbm{1}+\varepsilon^{2-\sigma}\eta(\varepsilon) u(-\Delta+\lambda\mathbbm{1})^{-1}v\big)^{-1}\;=\;F_\varepsilon^{(\lambda)}\,.
  \]
  By compactness and because of the choice of $\lambda$, $\mathbbm{1}+\varepsilon^{2-\sigma}\eta(\varepsilon) u(-\Delta+\lambda\mathbbm{1})^{-1}v$ is indeed invertible and its inverse is everywhere defined and bounded: so too is therefore $F_\varepsilon^{(\lambda)}$.  
 Beside, setting
 \[
  \begin{split}
     A_\varepsilon^{(\lambda)}\;&:=\;(\varepsilon^{3-\sigma}\eta(\varepsilon))^{-\frac{1}{2}}(-\Delta+\lambda\mathbbm{1})^{-1} v_\varepsilon \,U_\varepsilon\,, \\
     C_\varepsilon^{(\lambda)}\;&:=\;U_\varepsilon^*u_\varepsilon(-\Delta+\lambda\mathbbm{1})^{-1}(\varepsilon^{3-\sigma}\eta(\varepsilon))^{-\frac{1}{2}}\,,
  \end{split}
 \]
 one has 
 \[
  (H_\varepsilon+\lambda\mathbbm{1})^{-1}-(-\Delta+\lambda\mathbbm{1})^{-1}\;=\;-A_\varepsilon^{(\lambda)} F_\varepsilon^{(\lambda)} C_\varepsilon^{(\lambda)}\,,
 \]
 that is, \eqref{eq:AFCeps}, modulo checking that $A_\varepsilon^{(\lambda)}$ and $C_\varepsilon^{(\lambda)}$ defined above are indeed Hilbert-Schmidt operators with integral kernels \eqref{eq:defAepsCeps}. In fact, for $f\in L^2(\mathbb{R}^3)$,
 \[
  \begin{split}
   \big(A_\varepsilon^{(\lambda)} f\big)(x)\;&=\;(\varepsilon^{3-\sigma}\eta(\varepsilon))^{-\frac{1}{2}}\bigg((-\Delta+\lambda\mathbbm{1})^{-1} \sqrt{\frac{\eta(\varepsilon)}{\varepsilon^\sigma}\Big|V\Big(\frac{\cdot}{\varepsilon}\Big)\Big|}\,\frac{1}{\:\varepsilon^{\frac{3}{2}}} f\Big(\frac{\cdot}{\varepsilon}\Big)\bigg)(x) \\
   &=\;\int_{\mathbb{R}^3}G_\lambda(x-\varepsilon y)\sqrt{|V(y)|}\,f(y)\,\ud y\,,
  \end{split}
 \]
 and the check for the kernel of $C_\varepsilon^{(\lambda)}$ is completely analogous. Concerning the Hilbert-Schmidt property,
 \[
  \big\|A_\varepsilon^{(\lambda)} \big\|_{\mathrm{H.S.}}^2\;=\;\iint_{\mathbb{R}^3\times\mathbb{R}^3}\ud x\,\ud y\,\frac{\,|V(y)|\,e^{-2|x-\varepsilon y|\sqrt{\lambda}}\,}{(4\pi)^2|x-\varepsilon y|^2}\;=\;\frac{1}{\,8\pi\sqrt{\lambda}}\,\|V\|_{L^1}\,,
 \]
 and the very same bound can be analogously proved for $C_\varepsilon^{(\lambda)}$.
  \end{proof}

  \begin{proposition}\label{prop:AClimits}
   Let $V\in L^1(\mathbb{R}^3)$, $\lambda>0$, $\varepsilon\in(0,1]$. Then, for the operators $A_\varepsilon^{(\lambda)}$ and $C_\varepsilon^{(\lambda)}$ defined in \eqref{eq:defAepsCeps} one has
   \begin{equation}
    \begin{split}
     A_\varepsilon^{(\lambda)}\;&\xrightarrow{\;\varepsilon\downarrow 0\;}\;A^{(\lambda)}:=\;\big|G_\lambda\big\rangle\big\langle \sqrt{|V|\,}\big|\,, \\
     C_\varepsilon^{(\lambda)}\;&\xrightarrow{\;\varepsilon\downarrow 0\;}\;C^{(\lambda)}:=\;\big|\mathrm{sign}(V)\sqrt{|V|\,}\big\rangle\big\langle G_\lambda\big|
    \end{split}
   \end{equation}
   in the Hilbert-Schmidt norm.   
  \end{proposition}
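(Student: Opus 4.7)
The plan is to recognise each limit statement as a translation-continuity statement for $G_\lambda$ in $L^2(\mathbb{R}^3)$, integrated against $|V|$, and then to conclude by dominated convergence. The limit operators $A^{(\lambda)}$ and $C^{(\lambda)}$ are rank-one operators, with Hilbert--Schmidt kernels $A^{(\lambda)}(x,y) = G_\lambda(x)\sqrt{|V(y)|}$ and $C^{(\lambda)}(x,y) = \mathrm{sign}(V(x))\sqrt{|V(x)|}\,G_\lambda(y)$ (using that $G_\lambda$ is real-valued). Thus, subtracting from \eqref{eq:defAepsCeps}, the difference kernels are
\[
 \big(A_\varepsilon^{(\lambda)}-A^{(\lambda)}\big)(x,y)\;=\;\big(G_\lambda(x-\varepsilon y)-G_\lambda(x)\big)\sqrt{|V(y)|}\,,
\]
and analogously for $C_\varepsilon^{(\lambda)}-C^{(\lambda)}$, with the roles of $x$ and $y$ swapped and the extra factor $\mathrm{sign}(V(x))$, which has modulus one.

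First I would recall the explicit computation $\|G_\lambda\|_{L^2(\mathbb{R}^3)}^2=(8\pi\sqrt{\lambda})^{-1}<\infty$, so $G_\lambda\in L^2(\mathbb{R}^3)$ and hence, by continuity of translation in $L^2$, for every fixed $y\in\mathbb{R}^3$,
\[
 \big\|G_\lambda(\cdot-\varepsilon y)-G_\lambda\big\|_{L^2(\mathbb{R}^3)}^2\;\xrightarrow[\;\varepsilon\downarrow 0\;]{}\;0\,.
\]
Computing the Hilbert--Schmidt norm by Fubini,
\[
 \big\|A_\varepsilon^{(\lambda)}-A^{(\lambda)}\big\|_{\mathrm{H.S.}}^{\,2}\;=\;\int_{\mathbb{R}^3}|V(y)|\,\big\|G_\lambda(\cdot-\varepsilon y)-G_\lambda\big\|_{L^2(\mathbb{R}^3)}^2\,\ud y\,,
\]
the $y$-integrand converges pointwise to zero, and is dominated uniformly in $\varepsilon$ by the integrable majorant $4\,\|G_\lambda\|_{L^2}^2\,|V(y)|$, since $V\in L^1(\mathbb{R}^3)$. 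The dominated convergence theorem then yields the first limit.

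For $C_\varepsilon^{(\lambda)}$ I would proceed identically, using radial symmetry of $G_\lambda$ to rewrite $G_\lambda(\varepsilon x-y)=G_\lambda(y-\varepsilon x)$, so that
\[
 \big\|C_\varepsilon^{(\lambda)}-C^{(\lambda)}\big\|_{\mathrm{H.S.}}^{\,2}\;=\;\int_{\mathbb{R}^3}|V(x)|\,\big\|G_\lambda(\cdot-\varepsilon x)-G_\lambda\big\|_{L^2(\mathbb{R}^3)}^2\,\ud x\,,
\]
and the same translation-continuity plus dominated convergence argument gives the conclusion. There is no genuine obstacle here: the only mild points to monitor are the integrability of $G_\lambda$ at the origin (handled by the $L^2$ computation above) and the uniform dominating majorant (provided by $V\in L^1(\mathbb{R}^3)$, which is assumed in the statement).
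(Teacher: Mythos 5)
Your argument is correct, and it takes a genuinely different (and more self-contained) route than the paper. The paper does not estimate the difference kernel directly: it first observes that the change of variables built into $A_\varepsilon^{(\lambda)}$ makes the Hilbert--Schmidt norm \emph{exactly} preserved, $\|A_\varepsilon^{(\lambda)}\|_{\mathrm{H.S.}}=\|A^{(\lambda)}\|_{\mathrm{H.S.}}=(8\pi\sqrt{\lambda})^{-1/2}\|V\|_{L^1}^{1/2}$ for every $\varepsilon$, then proves weak operator convergence $A_\varepsilon^{(\lambda)}\rightharpoonup A^{(\lambda)}$ by dominated convergence on the kernels, and finally invokes the abstract fact that weak convergence together with convergence of the Hilbert--Schmidt norms upgrades to convergence in the Hilbert--Schmidt norm (citing \cite[Theorem 2.21]{simon_trace_ideals}). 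You instead compute $\|A_\varepsilon^{(\lambda)}-A^{(\lambda)}\|_{\mathrm{H.S.}}^2$ head-on via Fubini, reduce it to the translation-continuity of $G_\lambda$ in $L^2(\mathbb{R}^3)$ for each fixed $y$, and close with dominated convergence in $y$ using the majorant $4\|G_\lambda\|_{L^2}^2|V(y)|$; the treatment of $C_\varepsilon^{(\lambda)}$ via the radial symmetry of $G_\lambda$ is equally sound. Your version buys elementarity (no appeal to the weak-plus-norms criterion in trace-ideal theory) at the cost of not exhibiting the pleasant exact norm-preservation identity; the paper's version is slightly slicker once one is willing to quote the abstract lemma. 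Both are complete proofs.
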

  
  \begin{proof}
   By construction $A^{(\lambda)}$ has unit rank, and
   \[
     \big\|A^{(\lambda)} \big\|_{\mathrm{H.S.}}^2\;=\;\iint_{\mathbb{R}^3\times\mathbb{R}^3}\ud x\,\ud y\,\frac{\,|V(y)|\,e^{-2|x|\sqrt{\lambda}}\,}{(4\pi)^2|x|^2}\;=\;\frac{1}{\,8\pi\sqrt{\lambda}}\,\|V\|_{L^1}\;=\;\big\|A_\varepsilon^{(\lambda)} \big\|_{\mathrm{H.S.}}^2\,,
   \]
  meaning that in the limit $\varepsilon\downarrow 0$ the Hilbert-Schmidt norm is preserved. Moreover, by dominated convergence (at the level of the integral kernels), 
  \[
   A_\varepsilon^{(\lambda)}\;\xrightarrow{\;\varepsilon\downarrow 0\;}\;A^{(\lambda)}\qquad\textrm{weakly in the Hilbert space operator sense}\,.
  \]
  The two conditions above then imply that the limit actually holds in the Hilbert-Schmidt norm (see, e.g., \cite[Theorem 2.21]{simon_trace_ideals}). The very same reasoning applies to $C_\varepsilon^{(\lambda)}$.  
  \end{proof}

   \begin{remark}
   When $V\geqslant 0$, $C_\varepsilon^{(\lambda)}=\big(A_\varepsilon^{(\lambda)}\big)^*$ and $C^{(\lambda)}=\big(A^{(\lambda)}\big)^*$.
  \end{remark}

  \begin{proposition}\label{prop:Fepslimit}
  Consider the operators $F_\varepsilon^{(\lambda)}$ defined in \eqref{eq:defFeps} for given $\varepsilon\in(0,1]$,
  given $V\in\mathcal{R}$, given smooth function $\eta:[0,1]\to[0,+\infty)$ such that $\eta(0)=\eta(1)=1$, and given $\lambda>0$ such that $-\lambda\notin\sigma(H_\varepsilon)$ (where $H_\varepsilon=-\Delta+V_\varepsilon$ as a form sum with form domain $H^1(\mathbb{R}^3)$, $V_\varepsilon$ being given by \eqref{eq:general_scaling} for $\sigma>0$).
  \begin{enumerate}
   \item[(i)] Let $\sigma\in[0,2]$. When $\sigma=2$, assume in addition that $-1$ is \emph{not} an eigenvalue of $u(-\Delta)^{-1}v$.
   Then 
   \begin{equation}
    F_\varepsilon^{(\lambda)}\;\xrightarrow{\;\varepsilon\downarrow 0\;}\;\mathbb{O}
   \end{equation}
   in the norm operator sense.
   \item[(ii)] Let $\sigma=2$ and moreover assume that $(1+|\cdot|)V\in L^1(\mathbb{R}^3)$ and that $V$ satisfies the resonance condition \eqref{eq:ZERcondphi}-\eqref{eq:ZERcondphi2} for some $\phi\in L^2(\mathbb{R}^3)$ with the normalisation \eqref{eq:phinormalisation}.
   Then
   \begin{equation}
    F_\varepsilon^{(\lambda)}\;\xrightarrow{\;\varepsilon\downarrow 0\;}\;\bigg(\eta'(0)-\frac{\sqrt{\lambda}\,}{4\pi}\Big(\int_{\mathbb{R}^3}v\phi\,\ud x\Big)^2\bigg)^{-1}\,\big|\phi\big\rangle\big\langle \mathrm{sign}(V)\phi\big|
   \end{equation}
  in operator norm.
   \item[(iii)] Let $\sigma\in(2,3]$ and assume $V>0$ almost everywhere. 
   Then 
   \begin{equation}\label{eq:Feps-strongly-vanishing}
    F_\varepsilon^{(\lambda)}\;\xrightarrow{\;\varepsilon\downarrow 0\;}\;\mathbb{O}
   \end{equation}
   strongly in the Hilbert space operator sense.
     \end{enumerate}
  \end{proposition}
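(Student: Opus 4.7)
The plan is to exploit the rewriting
\[
F_\varepsilon^{(\lambda)}\;=\;\varepsilon^{3-\sigma}\eta(\varepsilon)\big(\mathbbm{1}+\varepsilon^{2-\sigma}\eta(\varepsilon)K_\varepsilon\big)^{-1}\,,\qquad K_\varepsilon\,:=\,u(-\Delta+\varepsilon^2\lambda\mathbbm{1})^{-1}v\,,
\]
obtained by factoring $\varepsilon^{\sigma-2}$ out of \eqref{eq:defFeps} (with $u=\mathrm{sign}(V)\sqrt{|V|}$, $v=\sqrt{|V|}$), together with the preparatory fact that $K_\varepsilon\to K_0:=u(-\Delta)^{-1}v$ in Hilbert--Schmidt norm as $\varepsilon\downarrow 0$. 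This convergence follows by dominated convergence on the integral kernels, since $G_{\varepsilon^2\lambda}(x-y)\leqslant G_0(x-y)=(4\pi|x-y|)^{-1}$ pointwise while the hypothesis $V\in\mathcal{R}$ makes $|V(x)|G_0(x-y)^2|V(y)|$ integrable on $\mathbb{R}^3\times\mathbb{R}^3$. Each of the three regimes will then emerge from the balance between the scalar prefactor $\varepsilon^{3-\sigma}\eta(\varepsilon)$ and the size/invertibility of the bracketed operator.

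For part (i) with $\sigma\in[0,2)$, the coupling $\varepsilon^{2-\sigma}\eta(\varepsilon)\to 0$ forces $\mathbbm{1}+\varepsilon^{2-\sigma}\eta(\varepsilon)K_\varepsilon\to\mathbbm{1}$ in operator norm with uniformly bounded inverse, so the scalar $\varepsilon^{3-\sigma}\to 0$ drives $F_\varepsilon^{(\lambda)}$ to $\mathbb{O}$ in operator norm. For $\sigma=2$ in the non-resonant case, compactness of $K_0$ and the hypothesis $-1\notin\sigma(K_0)$ yield invertibility of $\mathbbm{1}+K_0$ by the Fredholm alternative, whence $(\mathbbm{1}+\eta(\varepsilon)K_\varepsilon)^{-1}\to(\mathbbm{1}+K_0)^{-1}$ in operator norm and the prefactor $\varepsilon\eta(\varepsilon)\to 0$ closes the argument.

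The substantive work lies in part (ii), a Feshbach--Schur reduction around the simple eigenvalue $-1$ of $K_0$. From $K_0\phi=-\phi$ one extracts $v\psi=-\mathrm{sign}(V)\phi$, hence $K_0^*(\mathrm{sign}(V)\phi)=v(-\Delta)^{-1}v\phi=-\mathrm{sign}(V)\phi$; with the normalisation \eqref{eq:phinormalisation} the Riesz projection onto $\ker(\mathbbm{1}+K_0)$ is the genuine idempotent $P=-|\phi\rangle\langle\mathrm{sign}(V)\phi|$, satisfying $K_0P=PK_0=-P$, and $Q(\mathbbm{1}+K_0)Q$ is invertible on $\mathrm{ran}Q$ with $Q=\mathbbm{1}-P$. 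The Taylor expansion $G_{\varepsilon^2\lambda}(x-y)=G_0(x-y)-\varepsilon\sqrt{\lambda}/(4\pi)+O(\varepsilon^2|x-y|)$ of the Yukawa kernel combined with $\eta(\varepsilon)=1+\varepsilon\eta'(0)+O(\varepsilon^2)$ gives
\[
\mathbbm{1}+\eta(\varepsilon)K_\varepsilon\;=\;(\mathbbm{1}+K_0)+\varepsilon M+r(\varepsilon)\,,\qquad M\,:=\,\eta'(0)K_0-\frac{\sqrt{\lambda}}{4\pi}|u\rangle\langle v|\,,
\]
where the bound $\|r(\varepsilon)\|_{\mathrm{HS}}=O(\varepsilon^{3/2})$ is obtained by splitting the integration region according to $|x-y|\lessgtr 1/(\varepsilon\sqrt{\lambda})$ and invoking $(1+|\cdot|)V\in L^1(\mathbb{R}^3)$. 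The Schur complement on $\mathrm{ran}P$ equals $S_\varepsilon=\varepsilon PMP+O(\varepsilon^{3/2})$, and the identities $PK_0P=-P$ together with $Pu=-\langle v,\phi\rangle\phi$ collapse it to the scalar multiple $PMP=\big(-\eta'(0)+\frac{\sqrt{\lambda}}{4\pi}\langle v,\phi\rangle^2\big)P$; since $S_\varepsilon^{-1}=O(\varepsilon^{-1})$ while all remaining blocks of $(\mathbbm{1}+\eta(\varepsilon)K_\varepsilon)^{-1}$ are $O(1)$, multiplication by the prefactor $\varepsilon\eta(\varepsilon)$ produces precisely the claimed rank-one limit, with $O(\varepsilon)$ corrections from the off-diagonal contributions.

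For part (iii) positivity does the job. When $V>0$ almost everywhere one has $u=v$, making $K_\varepsilon\geqslant 0$ self-adjoint and forcing $\|(\mathbbm{1}+\tau_\varepsilon K_\varepsilon)^{-1}\|\leqslant 1$ with $\tau_\varepsilon:=\varepsilon^{2-\sigma}\eta(\varepsilon)\to+\infty$. For $\sigma\in(2,3)$ the vanishing prefactor $\varepsilon^{3-\sigma}\to 0$ yields norm convergence to $\mathbb{O}$ at once. For the borderline $\sigma=3$ a strong-convergence argument is required: fixing $f\in L^2(\mathbb{R}^3)$ and setting $g_\varepsilon:=(\mathbbm{1}+\tau_\varepsilon K_\varepsilon)^{-1}f$, the scalar product of $(\mathbbm{1}+\tau_\varepsilon K_\varepsilon)g_\varepsilon=f$ with $g_\varepsilon$ gives both $\|g_\varepsilon\|\leqslant\|f\|$ and $\langle g_\varepsilon,K_\varepsilon g_\varepsilon\rangle\leqslant\tau_\varepsilon^{-1}\|f\|^2\to 0$; norm continuity of the positive square root combined with $K_\varepsilon\to K_0$ in norm upgrades the latter to $K_0^{1/2}g_\varepsilon\to 0$ in $L^2$, so every weak accumulation point $g$ of $(g_\varepsilon)$ satisfies $K_0 g=0$, and injectivity of $K_0$ (forced by $V>0$ a.e.) yields $g=0$. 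Hence $g_\varepsilon\rightharpoonup 0$, and the inequality $\|g_\varepsilon\|^2\leqslant\langle g_\varepsilon,f\rangle$ (obtained by dropping the non-negative $\tau_\varepsilon\langle g_\varepsilon,K_\varepsilon g_\varepsilon\rangle$) upgrades the weak convergence to a strong one, giving \eqref{eq:Feps-strongly-vanishing}. The main obstacle is the Feshbach--Schur reduction of part (ii), where one must simultaneously handle the singular inversion of $\mathbbm{1}+K_0$ on the one-dimensional $\phi$-line, the Hilbert--Schmidt smallness of $r(\varepsilon)$ (which is precisely where the assumption $(1+|\cdot|)V\in L^1$ is consumed), and the algebraic bookkeeping forced by the non-selfadjointness of $K_0$.
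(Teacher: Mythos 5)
Your proposal is correct, and it is worth separating its three parts when comparing with the paper. For $\sigma\in[0,2)$ you argue exactly as the paper does: Hilbert--Schmidt convergence of $u(-\Delta+\varepsilon^2\lambda\mathbbm{1})^{-1}v$ by dominated convergence, a Neumann-series control of the inverse, and the vanishing scalar prefactor $\varepsilon^{3-\sigma}\eta(\varepsilon)$. For the two $\sigma=2$ cases the paper gives no proof at all --- it defers to \cite{AHK-1981-JOPTH} and \cite[Lemma I.1.2.4]{albeverio-solvable} --- whereas you reconstruct the classical argument: Fredholm alternative plus norm-resolvent continuity in the non-resonant case, and in the resonant case a Riesz-projection/Feshbach reduction around the simple eigenvalue $-1$; your algebra is right ($P=-|\phi\rangle\langle\mathrm{sign}(V)\phi|$ is idempotent precisely because of the normalisation \eqref{eq:phinormalisation}, $PK_0=K_0P=-P$ so $K_0$ is block-diagonal, and $PMP=\bigl(-\eta'(0)+\tfrac{\sqrt{\lambda}}{4\pi}\langle v,\phi\rangle^2\bigr)P$ reproduces the stated rank-one limit), and your $O(\varepsilon^{3/2})$ Hilbert--Schmidt remainder via the interpolated bound on $|e^{-t}-1+t|$ is exactly where $(1+|\cdot|)V\in L^1$ is consumed. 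The genuinely different route is part (iii): the paper sandwiches $\mathbb{O}\leqslant F_\varepsilon^{(\lambda)}\leqslant\mathcal{F}_\varepsilon$ with the $\varepsilon$-independent comparison operator $B=v(2(-\Delta)+\mathbbm{1})^{-1}v$, applies the spectral theorem and dominated convergence to get $\mathcal{F}_\varepsilon^{1/2}f\to 0$ (the hypothesis $V>0$ a.e.\ entering through $0\notin\sigma_{\mathrm{p}}(B)$), and closes with the uniform boundedness principle; you instead work directly with $g_\varepsilon=(\mathbbm{1}+\tau_\varepsilon K_\varepsilon)^{-1}f$, extract $\langle g_\varepsilon,K_\varepsilon g_\varepsilon\rangle=O(\tau_\varepsilon^{-1})$ from the quadratic form, upgrade to $K_0^{1/2}g_\varepsilon\to 0$ via operator-monotonicity of the square root, and finish with injectivity of $K_0$ (again from $V>0$ a.e.) and weak compactness. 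Both arguments are sound; yours has the small bonus of yielding norm (not merely strong) convergence for $\sigma\in(2,3)$ and of dispensing with the auxiliary operator $B$, at the price of the square-root continuity step.
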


   \begin{proof}
    Part (i) for $\sigma=2$ in the non-resonant case and part (ii) are classical results established first in \cite{AHK-1981-JOPTH} -- we refer to \cite[Lemma I.1.2.4]{albeverio-solvable}. 
    The proof will then focus on the regimes $\sigma\in[0,2)\cup(2,3]$.

    Observe that a standard application of dominated convergence implies
    \[
     v(-\Delta+\varepsilon^2\lambda\mathbbm{1})^{-1}v\;\xrightarrow{\;\varepsilon\downarrow 0\;}\;v(-\Delta)^{-1}v\
    \]
    in the Hilbert-Schmidt norm, owing to the assumption $V\in\mathcal{R}$. When $\sigma\in[0,2)$ one then has 
    \[
     \mathbbm{1}+\varepsilon^{2-\sigma}\eta(\varepsilon)u(-\Delta+\varepsilon^2\lambda\mathbbm{1})^{-1}v\;\xrightarrow{\;\varepsilon\downarrow 0\;}\;\mathbbm{1}
    \]
    in the Hilbert-Schmidt (and hence also operator) norm, whence
    \[
     F_\varepsilon^{(\lambda)}\;=\;\varepsilon^{3-\sigma}\eta(\varepsilon)\Big( \mathbbm{1}+\varepsilon^{2-\sigma}\eta(\varepsilon)u(-\Delta+\varepsilon^2\lambda\mathbbm{1})^{-1}v\Big)^{-1}\;\xrightarrow{\;\varepsilon\downarrow 0\;}\;\mathbb{O}
    \]
    in the operator norm.

    In the regime $\sigma\in(2,3]$, owing to the further assumption of positivity of $V$,
    \[
     F_\varepsilon^{(\lambda)}\;=\;\varepsilon\,\eta(\varepsilon)\Big(\varepsilon^{\sigma-2}\mathbbm{1}+\eta(\varepsilon)v(-\Delta+\varepsilon^2\lambda\mathbbm{1})^{-1}v\Big)^{-1}\,.
    \]
    It is convenient to compare $F_\varepsilon^{(\lambda)}$ to
    \[
     \mathcal{F}_\varepsilon\;:=\;\varepsilon\,\eta(\varepsilon)\Big(\varepsilon^{\sigma-2}\mathbbm{1}+\eta(\varepsilon)v\big(2(-\Delta)+\mathbbm{1}\big)^{-1}v\Big)^{-1}\,.
    \]
    In fact, when $\varepsilon^2\lambda<1$,
    \[
     v(-\Delta+\varepsilon^2\lambda\mathbbm{1})^{-1}v\;\geqslant\; v\big(2(-\Delta)+\mathbbm{1}\big)^{-1}v\;\geqslant\;\mathbb{O}\,,
    \]
  whence
  \[
   \mathbb{O}\;\leqslant\;F_\varepsilon^{(\lambda)}\;\leqslant\;\mathcal{F}_\varepsilon\,.
  \]
   This implies, for generic $f\in L^2(\mathbb{R}^3)$,
   \[
    \begin{split}
     \big\| F_\varepsilon^{(\lambda)}f\big\|_{L^2}\;&=\sup_{ \substack{ g\in L^2(\mathbb{R}^3) \\ \|g\|_{L^2}=1} }\big|\big\langle g,F_\varepsilon^{(\lambda)}f\big\rangle_{L^2}\big| \;\leqslant\sup_{ \substack{ g\in L^2(\mathbb{R}^3) \\ \|g\|_{L^2}=1} }\Big\| \big(F_\varepsilon^{(\lambda)}\big)^{\frac{1}{2}}g\Big\|_{L^2} \;\Big\| \big(F_\varepsilon^{(\lambda)}\big)^{\frac{1}{2}}f\Big\|_{L^2} \\
     &\leqslant\sup_{ \substack{ g\in L^2(\mathbb{R}^3) \\ \|g\|_{L^2}=1} }\Big\| \mathcal{F}_\varepsilon^{\frac{1}{2}} g\Big\|_{L^2} \;\Big\| \mathcal{F}_\varepsilon^{\frac{1}{2}}f\Big\|_{L^2} \;=\;\big\|\mathcal{F}_\varepsilon^{\frac{1}{2}}\big\|_{\mathrm{op}}\big\| \mathcal{F}_\varepsilon^{\frac{1}{2}}f\big\|_{L^2}\,,
    \end{split}
   \]
   having used the inequality $\mathbb{O}\leqslant F_\varepsilon^{(\lambda)}\leqslant\mathcal{F}_\varepsilon$ in the third step.

   Concerning the norm $\big\| \mathcal{F}_\varepsilon^{\frac{1}{2}}f\big\|_{L^2}$\,,
   \[
    \begin{split}
    \big\| \mathcal{F}_\varepsilon^{\frac{1}{2}}f\big\|^2_{L^2}\;&=\;\int_{[0,+\infty)}\Big| \frac{\varepsilon\,\eta(\varepsilon)}{\,\varepsilon^{\sigma-2}+\eta(\varepsilon)t}\Big|\,\ud\mu_f^{(B)}(t) \\
      &=\;\varepsilon^{3-\sigma}\mu_f^{(B)}(\{0\})+\int_{(0,+\infty)}\Big| \frac{\varepsilon\,\eta(\varepsilon)}{\,\varepsilon^{\sigma-2}+\eta(\varepsilon)t}\Big|\,\ud\mu_f^{(B)}(t)\,,     
    \end{split}
   \]
  having set $B:=v\big(2(-\Delta)+\mathbbm{1}\big)^{-1}v$, a \emph{positive} self-adjoint Hilbert-Schmidt operator on $L^2(\mathbb{R}^3)$, and having denoted with $\mu_f^{(B)}$ the scalar spectral measure associated with $B$ and $f$. Actually,
   \[
    \mu_f^{(B)}(\{0\})\;=\;\big\langle f, E^{(B)}(\{0\}))f\big\rangle_{L^2}\;=\;0
   \]
  (where $E^{(B)}(\cdot))$ is the projection-valued measure associated to $B$) as $E^{(B)}(\{0\}))=\mathbb{O}$, because the spectral value zero is \emph{not} an eigenvalue for $B$. The latter fact is a consequence of the positivity of $V$ \emph{almost everywhere}, which allows to deduce from $0=Bg=v\big(2(-\Delta)+\mathbbm{1}\big)^{-1}v g$ precisely $g=0$. Therefore,
  \[
  \big\| \mathcal{F}_\varepsilon^{\frac{1}{2}}f\big\|^2_{L^2}\;=\;\int_{(0,+\infty)}\Big| \frac{\varepsilon\,\eta(\varepsilon)}{\,\varepsilon^{\sigma-2}+\eta(\varepsilon)t}\Big|\,\ud\mu_f^{(B)}(t)
  \]
  and since $t>0$ dominated convergence is applicable (with integrable majorant given by the constant function $\|\eta\|_{L^\infty}\mathbf{1}$ inside the integral), and
  \[
 \big\| \mathcal{F}_\varepsilon^{\frac{1}{2}}f\big\|_{L^2}\;\;\xrightarrow{\;\varepsilon\downarrow 0\;}\;0\qquad \forall f\in L^2(\mathbb{R}^3)\,.
  \]
  This shows that $\mathcal{F}_\varepsilon^{\frac{1}{2}}\xrightarrow{\,\varepsilon\downarrow 0\,}\mathbb{O}$ in the strong operator topology, whence also 
  \[
   \big\|\mathcal{F}_\varepsilon^{\frac{1}{2}}\big\|_{\mathrm{op}}\;\leqslant\; C\;<\;+\infty\qquad\textrm{uniformly in $\varepsilon$}\,,
  \]
  owing to the uniform boundedness principle.

  The latter findings yield
  \[
   \big\| F_\varepsilon^{(\lambda)}f\big\|_{L^2}\;\leqslant\;\big\|\mathcal{F}_\varepsilon^{\frac{1}{2}}\big\|_{\mathrm{op}}\big\| \mathcal{F}_\varepsilon^{\frac{1}{2}}f\big\|_{L^2}\;\;\xrightarrow{\;\varepsilon\downarrow 0\;}\;0\qquad \forall f\in L^2(\mathbb{R}^3)\,,
  \]
  which is precisely the limit \eqref{eq:Feps-strongly-vanishing}.
   \end{proof}

  \begin{proof}[Proof of Theorem \ref{thm:res-convergence}]
   We already commented that the whole case $\sigma=2$ is already present in the literature (see., e.g., \cite[Theorem I.1.2.5]{albeverio-solvable}): we re-obtain it here for completeness of presentation and for comparison purposes with respect to the other values of the parameter $\sigma$.
   
   (i) Under the assumptions for this first part of the theorem, 
   we see that Propositions \ref{prop:AFC}, \ref{prop:AClimits}, and \ref{prop:Fepslimit}(i) and (iii) imply
   \[
    (H_\varepsilon+\lambda\mathbbm{1})^{-1}-(-\Delta+\lambda\mathbbm{1})^{-1}\;=\;-A_\varepsilon^{(\lambda)} F_\varepsilon^{(\lambda)} C_\varepsilon^{(\lambda)}\;\xrightarrow{\;\varepsilon\downarrow 0 \;} \;\mathbb{O}
   \]
   in the Hilbert space norm operator topology when $\sigma\in[0,2]$ and in the Hilbert space strong operator topology when $\sigma\in(2,3]$
   $\lambda>0$ with $-\lambda\notin\sigma(H_\varepsilon)$ for each sufficiently small $\varepsilon$
   (and any $\lambda\in\mathbb{C}\setminus\mathbb{R}$). This is the limit \eqref{eq:resolventlimit1}.

%
%

   (ii) Under the assumptions for this first part of the theorem, Propositions \ref{prop:AFC}, \ref{prop:AClimits}, and \ref{prop:Fepslimit}(ii) now imply
   \[
    \begin{split}
     (H_\varepsilon+\lambda\mathbbm{1})^{-1}-(-\Delta+\lambda\mathbbm{1})^{-1}\;&=\;-A_\varepsilon^{(\lambda)} F_\varepsilon^{(\lambda)} C_\varepsilon^{(\lambda)} \\
     & \quad\; \xrightarrow{\;\varepsilon\downarrow 0 \;}\;-\frac{\big(\int_{\mathbb{R}^3}v\phi\,\ud x\big)^2}{\: \eta'(0)-\frac{\sqrt{\lambda}\,}{4\pi}\big(\int_{\mathbb{R}^3}v\phi\,\ud x\big)^2 \:}\,|G_\lambda\rangle\langle G_\lambda|
    \end{split}
   \]
   in the Hilbert space norm operator topology.
   Dividing in the r.h.s.~by $\big(\int_{\mathbb{R}^3}v\phi\,\ud x\big)^2=\big(\int_{\mathbb{R}^3}V\psi\,\ud x\big)^2\neq 0$ (see \eqref{eq:psiL2locnotL2} above) and defining $\alpha$ as in \eqref{eq:thm2-def-alpha}, the above limit reads
   \[
     (H_\varepsilon+\lambda\mathbbm{1})^{-1}\;\xrightarrow{\;\varepsilon\downarrow 0 \;}\;(-\Delta+\lambda\mathbbm{1})^{-1}+\frac{1}{\alpha+\frac{\sqrt{\lambda}}{4\pi}}\,|G_\lambda\rangle\langle G_\lambda|\,.
   \]
  The r.h.s.~is precisely the resolvent $(-\Delta_\alpha+\lambda\mathbbm{1})^{-1}$, see \eqref{eq:resolventDalpha}. The limit \eqref{eq:resolventlimit2} is thus established.  
  \end{proof}

  \begin{remark}\label{rem:sigma-above3}
   The factorised structure \eqref{eq:AFCeps} for the resolvent difference
   \[
    (H_\varepsilon+\lambda\mathbbm{1})^{-1}-(-\Delta+\lambda\mathbbm{1})^{-1}
   \]
   is always valid, irrespective of $\sigma\geqslant 0$ (Proposition \ref{prop:AFC}). What constrains $\sigma$ not to exceed the threshold value $\sigma=3$ is the control of the limit in the $F_\varepsilon^{(\lambda)}$-term as performed in Proposition \ref{prop:Fepslimit}, which is the actual crucial step for the current proof of Theorem \ref{thm:res-convergence}. Indeed, as is evident from the magnitude of the powers of $\varepsilon$, $F_\varepsilon^{(\lambda)}$ ceases to have a definite strong limit as long as $\sigma>3$, and in particular the quantity $\varepsilon^{3-\sigma}\mu_f^{(B)}(\{0\})$ singled out in the proof of Proposition \ref{prop:Fepslimit} becomes uncontrollable. This invalidates the scheme where the strong vanishing of $A_\varepsilon^{(\lambda)}F_\varepsilon^{(\lambda)}C_\varepsilon^{(\lambda)}$ is established by a \emph{separate} analysis of the norm limits of $A_\varepsilon^{(\lambda)}$ and $C_\varepsilon^{(\lambda)}$, and of the strong limit of $F_\varepsilon^{(\lambda)}$. This also naturally suggests that in super-Colombeau regimes $\sigma>3$ the operator $A_\varepsilon^{(\lambda)}F_\varepsilon^{(\lambda)}C_\varepsilon^{(\lambda)}$ is to be treated \emph{as a whole}. So, concretely speaking, one should rather investigate the limiting behaviour of $F_\varepsilon^{(\lambda)}$ on the range of $C_\varepsilon^{(\lambda)}$, and the like. Beside, a more manageable control would be expected in terms of the integral kernels, which would lead to establish \emph{weak} resolvent convergence: in turn, by standard resolvent formulas, this would imply again strong resolvent convergence (see, e.g., \cite[Problem VIII.20]{rs1}). We defer the presentation of our reasonings along this conceptual line to a future publication.   
  \end{remark}

  \section{Proof of Theorems \ref{thm:main1} and \ref{thm:Colombeau-case}}\label{sec:proofmainresults}

  Let us discuss first the linear case.

  \begin{proof}[Proof of Theorem \ref{thm:main1} -- linear case $w\equiv \mathbf{0}$]
   Owing to the resolvent convergence obtained in Theorem \ref{thm:res-convergence}, which for the present purposes is sufficient to have in the \emph{strong} operator sense, and applying Trotter's theorem (see, e.g., \cite[Theorem VIII.21 and Problem VIII.21]{rs1}), we can equivalently re-write the limits \eqref{eq:resolventlimit1} and \eqref{eq:resolventlimit2}, respectively, as
     \[
    \big\| e^{-\ii t H_\varepsilon} a - e^{\ii t \Delta} a \big\|_{L^\infty([0,\textsf{T}],L^2_x)}\;\xrightarrow{\;\varepsilon\downarrow 0\;}\; 0\qquad \textrm{(case (i))}
   \]
  and 
   \[
    \big\| e^{-\ii t H_\varepsilon} a - e^{\ii t \Delta_\alpha} a \big\|_{L^\infty([0,\textsf{T}],L^2_x)}\;\xrightarrow{\;\varepsilon\downarrow 0\;}\; 0\qquad \!\textrm{(case (ii))}
   \]
   for any initial datum $a\in L^2(\mathbb{R}^3)$. The linear case is thus proved.   
  \end{proof}

  \begin{remark}
   Whereas we used the known property (Trotter's theorem) for which strong resolvent convergence between self-adjoint operators is equivalent to the strong convergence of the unitary groups (propagators) generated by them,
   the convergence of such propagators \emph{in operator norm} cannot hold (as emerges, e.g., from the proof of \cite[Theorem VIII.20 and Problem VIII.29]{rs1}).
  \end{remark}

  Next, let us discuss the non-linear case.

  \begin{proof}[Proof of Theorem \ref{thm:main1} -- non-linear case $w \equiv\!\!\!\!\!/ \;\:\mathbf{0}$]
   Let us consider non-restrictively the forward in time evolution ($t>0$) and re-write the differential problems under considerations, namely
   \[
    \begin{cases}
    \;\;\; \ii\partial_t u_\varepsilon \,=\, H_\varepsilon u_\varepsilon + (w*|u_\varepsilon|^2) u_\varepsilon\,, \\
     u_\varepsilon(0,\cdot)\,=\,a \quad(\forall\varepsilon>0)     
    \end{cases}
   \]
   and 
   \[
    \begin{cases}
    \;\;\; \ii\partial_t u \,=\, H^{\circ} u + (w*|u|^2) u\,, \\
     u(0,\cdot)\,=\,a \,,
    \end{cases}
   \]
   in integral form, that is, respectively,
   \[
    u_\varepsilon(t)\;=\;e^{-\ii t H_\varepsilon} a - \ii \int_0^t \big(e^{-\ii (t-s) H_\varepsilon} (w*|u_\varepsilon|^2)u_\varepsilon\big)(s)\,\ud s
   \]
   and 
   \[
    u(t)\;=\;e^{-\ii t H^{\circ}} a - \ii\int_0^t \big(e^{-\ii (t-s) H^{\circ}} (w*|u|^2)u\big)(s)\,\ud s\,.
   \]
  To unify the treatment, in the above expressions $H^{\circ}$ stays for either the self-adjoint negative Laplacian $-\Delta$ or for the self-adjoint point interaction Hamiltonian $-\Delta_\alpha$, depending on case (i) or (ii) of the theorem. Beside, we make the customary abuse of notation writing $u(t)$ for $u(t,\cdot)$, and the like. The above expressions are all meaningful, owing to the well-posedness provided by Theorem \ref{thm:well-posedness}.
   
   We have
    \begin{equation*}\tag{a}\label{eq:diff-split}
  u_\varepsilon(t)-u(t)\;=\;P_\varepsilon(t)+Q_\varepsilon(t)+R_\varepsilon(t)\,,
 \end{equation*}
 where
 \begin{equation*}
  \begin{split}
   P_\varepsilon(t)\;&:=\;e^{-\ii t H_\varepsilon}a-e^{-\ii t H^{\circ}}a\,, \\
   Q_\varepsilon(t)\;&:=\;-\ii \int_0^t e^{-\ii (t-s) H_\varepsilon}\big( (w*|u_\varepsilon|^2)u_\varepsilon-(w*|u|^2)u\big)(s)\,\ud s\,, \\
   R_\varepsilon(t)\;&:=\;-\ii\int_0^t \Big(e^{-\ii (t-s) H_\varepsilon}-e^{-\ii (t-s) H^{\circ}}\Big)(w*|u|^2)u(s)\,\ud s\,.
  \end{split}
 \end{equation*}

   Concerning $P_\varepsilon$,
   \[\tag{b}\label{eq:Peps}
    \lim_{\varepsilon\downarrow 0}\big\| P_\varepsilon\big\|_{L^\infty([0,\textsf{T}],L^2_x)}=\,0\,,
   \]
  as obtained already in the proof for the linear problem.

  Concerning $R_\varepsilon$, the H\"{o}lder and Young inequalities and the preservation of the $L^2$-norm of the solution (Theorem \ref{thm:well-posedness}) yield (at any $t$)
  \[
   \begin{split}
       \big\| (w*|u |^2)u  \big\|_{L^2_x}\;&\leqslant\;\big\| w*|u |^2 \big\|_{L^\infty_x}\|u \|_{L^2_x} \\
       &\lesssim\;\|w\|_{L^\infty_x}\|u \|^3_{L^2_x} \\
       &=\;\|w\|_{L^\infty_x}\|a\|^3_{L^2_x}\,,
   \end{split}
  \]
  meaning that $(w*|u|^2)u\in L^2(\mathbb{R}^3)$ at any time, and therefore, again on account of the proof for the linear problem,
  \[\tag{c}\label{eq:Reps}
    \lim_{\varepsilon\downarrow 0}\big\| R_\varepsilon\big\|_{L^\infty([0,\textsf{T}],L^2_x)}=\,0\,.
   \]

 Last, concerning $Q_\varepsilon$, and for $T\in(0,\textsf{T}]$ to be specified in a moment,
  \begin{equation*}
   \begin{split}
   \big\| Q_\varepsilon\big\|_{L^\infty([0,T],L^2_x)} &\leqslant\; T \,\big\| (w*|u_\varepsilon|^2)u_\varepsilon-(w*|u|^2)u\big\|_{L^\infty([0,T],L^2_x)} \,,
   \end{split}
  \end{equation*}
  owing to $L^2$-unitarity. In the above r.h.s.,
  \[
   \begin{split}
   \big|(w*|u_\varepsilon|^2)u_\varepsilon-(w*|u|^2)u \big| \; &\leqslant\; (|w|*|u_\varepsilon|^2)|u_\varepsilon-u| \\
   & \quad + \big( |w|*\big(|u_\varepsilon-u|(|u_\varepsilon|+|u|)\big)\big)|u|\,.
   \end{split}
  \]
   The Young inequality and the conservation of the $L^2$-norm of the solutions allow to deduce
   \begin{equation*}\tag{d}\label{eq:Qeps}
     \big\| Q_\varepsilon\big\|_{L^\infty([0,T],L^2_x)}\;\leqslant\; 3\,T\, \|w\|_{L^\infty_x}\|a\|_{L^2_x}^2,\|u_\varepsilon-u\|_{L^\infty([0,T],L^2_x)}\,.
   \end{equation*}
   Selecting $T$ so as $T<(3\|w\|_{L^\infty_x}\|a\|_{L^2_x}^2)^{-1}$, \eqref{eq:diff-split} and \eqref{eq:Qeps} together yield
   \begin{equation*}
    \|u_\varepsilon-u\|_{L^\infty([0,T],L^2_x)}\;\leqslant\;\frac{\;\big\| P_\varepsilon\big\|_{L^\infty([0,T],L^2_x)}+\big\| R_\varepsilon\big\|_{L^\infty([0,T],L^2_x)}}{\big(1-3 \,T\, \|w\|_{L^\infty_x}\|a\|_{L^2_x}^2\big)}
   \end{equation*}
 and \eqref{eq:Peps}-\eqref{eq:Reps} then imply 
 \begin{equation*}
   \lim_{\varepsilon\downarrow 0}\|u_\varepsilon-u\|_{L^\infty([0,T],L^2_x)}=\;0\,.
 \end{equation*}

  We then repeat the above reasoning finitely many times by partitioning the interval $[0,\mathsf{T}]$ into intervals $[kT,(k+1)T]\cap[0,\mathsf{T}]$, $k=0,1,2,\dots$.
  For each such step, the difference 
  \[
   \|u_\varepsilon-u\|_{L^\infty([kT,(k+1)T],L^2_x)}
  \]
  is investigated with a splitting that is completely analogous to the above terms $P_\varepsilon$, $Q_\varepsilon$, $R_\varepsilon$, where clearly the initial time is now $t=kT$ instead of $t=0$ as in the first step. Obviously, an additional term arises, of the form
  \[
   e^{-\ii (t-kT)H^{\circ}} \big( u_\varepsilon(kT)-u(kT) \big)\,,
  \]
  due to the fact that the initial data for the $k$-th interval are different for $u_\varepsilon$ and $u$: its $L^2_x$-vanishing follows from the uniform control on the $(k-1)$-th time interval.

  The final conclusion is
  \begin{equation*}
   \lim_{\varepsilon\downarrow 0}\|u_\varepsilon-u\|_{L^\infty([0,\mathsf{T}],L^2_x)}=\;0\,.
 \end{equation*}
  The limits \eqref{eq:mainthmlimit1} and \eqref{eq:mainthmlimit2} are thus proved.  
  \end{proof}

  Last, for the case $\sigma=3$, let us establish compatibility in the sense of the Colombeau generalised solution theory.

 \begin{proof}[Proof of Theorem \ref{thm:Colombeau-case}]
  In the formalism of Colombeau generalised solutions (Appendix \ref{app:Colombeau}), we have to consider the net $(u_\varepsilon)_{\varepsilon\in(0,1]}$, 
  where each $u_\varepsilon$ is the unique $L^2$-solution to
  \[
  \begin{cases}
   \ii\partial_t u_\varepsilon\,=\,-\Delta u_\varepsilon +\phi_\varepsilon u_\varepsilon+(w*|u_\varepsilon|^2)u_\varepsilon\,, \\
   u_\varepsilon(0,\cdot)\,=\,a_\varepsilon\;:=\;a*\rho_\varepsilon\,.
  \end{cases}
\]
  Here, for every $x\in\mathbb{R}^3$ and $\varepsilon\in(0,1]$,
   \begin{equation*}
  \rho_\varepsilon(x)\,:=\,\frac{1}{\,\varepsilon^3}\,\rho\Big(\frac{x}{\varepsilon}\Big)
 \end{equation*}
 for some $\rho\in\mathcal{S}(\mathbb{R}^3)$ such that $\int_{\R^3}\rho(x)\,\ud x=1$ (see \eqref{eq:colemb}), 
  and  
  \[
   \phi_\varepsilon(x)\;:=\;\frac{1}{\,\varepsilon^3}\,\phi\Big(\frac{x}{\varepsilon}\Big)\,,
   \]
  for an arbitrary $\phi\in \mathcal{S}(\mathbb{R}^3)$ with $\phi> 0$ almost everywhere and $\int_{\mathbb{R}^3}\phi(x)\,\ud x=1$.


  Compatibility with \eqref{eq:nodeltaeq} would mean
  \[\tag{*}\label{eq:proof-comp}
     \lim_{\varepsilon\downarrow 0}\|u_\varepsilon-u\|_{L^\infty([0,\mathsf{T}],L^2_x)}=\;0
 \]
   for arbitrary $\mathsf{T}>0$, where $u$ is the unique solution in $C(\mathbb{R},L^2(\mathbb{R}^3))$ to the Cauchy problem
     \[
  \begin{cases}
   \ii\partial_t u\,=\,-\Delta  u+(w*|u|^2)u\,, \\
   u(0,\cdot)\,=\,a
  \end{cases}
\]
    (Theorem \ref{thm:well-posedness} again guaranteeing the existence of such $u$).

    In order to establish compatibility, 
    observe that 
%
%
  the present $\phi_\varepsilon$ is a special case of the general $V_\varepsilon$ considered in Theorem \ref{thm:res-convergence}, which is therefore applicable and yields
    \[
    \big\| e^{-\ii t H_\varepsilon} f - e^{\ii t \Delta} f \big\|_{L^\infty([0,\textsf{T}],L^2_x)}\;\xrightarrow{\;\varepsilon\downarrow 0\;}\; 0\qquad \forall f\in L^2(\mathbb{R}^3)\,,
   \]
  where here $H_\varepsilon$ is the self-adjoint form sum $-\Delta+\phi_\varepsilon$ on $L^2(\mathbb{R}^3)$. We can therefore repeat the reasoning of the proof of Theorem \ref{thm:main1} for the comparison, now, between
     \[
    u_\varepsilon(t)\;=\;e^{-\ii t H_\varepsilon} a_\varepsilon - \ii \int_0^t \big(e^{-\ii (t-s) H_\varepsilon} (w*|u_\varepsilon|^2)u_\varepsilon\big)(s)\,\ud s
   \]
   and 
   \[
    u(t)\;=\;e^{\ii t \Delta} a - \ii\int_0^t \big(e^{\ii (t-s) \Delta} (w*|u|^2)u\big)(s)\,\ud s\,.
   \]
   This proceeds along the very same scheme therein (first on a suitably small interval $[0,T]$ and then covering the whole $[0,\mathsf{T}]$, except for the presence of the additional term
   \[
     e^{-\ii t H_\varepsilon}a_\varepsilon - e^{\ii t \Delta} a\,.
   \]
   Now, by construction $a_\varepsilon=\rho_\varepsilon*a\xrightarrow{\,\varepsilon\downarrow 0\,}a$ in $H^s(\mathbb{R}^3)$ for any $s\geqslant 0$, hence in particular in $L^2(\mathbb{R}^3)$. This implies
   \[
    \begin{split}
     &\big\|e^{-\ii t H_\varepsilon}a_\varepsilon - e^{\ii t \Delta} a\big\|_{L^\infty([0,\mathsf{T}],L^2_x)} \\
     &\qquad\qquad\leqslant\;\big\|e^{-\ii t H_\varepsilon}a_\varepsilon - e^{-\ii t H_\varepsilon}a\big\|_{L^\infty([0,\mathsf{T}],L^2_x)}+\big\|e^{-\ii t H_\varepsilon}a - e^{\ii t \Delta} a\big\|_{L^\infty([0,\mathsf{T}],L^2_x)} \\
     &\qquad\qquad=\;\|a_\varepsilon-a\|_{L^2_x}+\big\|e^{-\ii t H_\varepsilon}a - e^{\ii t \Delta} a\big\|_{L^\infty([0,\mathsf{T}],L^2_x)}\;\xrightarrow{\;\varepsilon\downarrow 0\;}\;0
    \end{split}
   \]
   and finally establishes \eqref{eq:proof-comp}.
 \end{proof}

 \appendix
 
 \section{The Colombeau algebra and the notions of generalised solutions and compatibility for Schr\"{o}dinger equations.}\label{app:Colombeau}

 
  

 We recall in this Appendix the main features of the Colombeau algebra of generalised functions, we referred to in Section \ref{sec:intro-main} and in Theorem \ref{thm:Colombeau-case}.

  The Colombeau algebra is an associative differential algebra, and as already mentioned it displays properties that overcome the problem of multiplication of distributions, thus making it well suited as a solution framework for non-linear partial differential equations. It was introduced by Colombeau \cite{Colombeau-1984,Colombeau-1985} in the early 1980's, and it is discussed in the literature through an ample spectrum of precursors, refinements, and more recent reviews, as in \cite{Rosinger-LNM1978,Rosinger-1980,Rosinger-1987,Biagioni-LNM1990,Oberguggenberger-1992,Grosser-Farkas-Kunzinger-Steinbauer-2001,Grosser-Kunzinger-Oberguggenberger-Steinbauer-2001,Gsponer-2009} (observe, for instance, that in the precursor represented by Rosinger's notion of generalised functions \cite{Rosinger-LNM1978,Rosinger-1980}, the ``product of deltas'' $\delta^2$ was still ill-defined, whereas it is allowed within the Colombeau algebra).

  For concreteness, and for its relevance in the discussion of this work, we present here in particular the $H^2$-based Colombeau algebra, following the approach of \cite{Nedeljkov-Oberguggenberger-Pilipovic-2005,Nedeljkov-Pilipovic-RajterCiric-2005-heatdelta}. Additional details in this respect may be found in \cite{DV-2022-INdAMSpringer}. A completely analogous construction can be made for $E$-based Colombeau algebras for functional spaces $E$ that are locally convex spaces and multiplicative algebras.

  Given $T>0$, one denotes by
  \[
   {\mathcal{E}_{C^1,H^2}([0,T)\times\mathbb{R}^3)}\,,\qquad\textrm{respectively,}\qquad
{\mathcal{N}_{C^1,H^2}([0,T)\times\mathbb{R}^3)}
  \]
 the vector 
space of nets $(u_\varepsilon)_{\varepsilon\in(0,1]}$ (with natural component-wise linear structure), of functions
\[u_\varepsilon\;\in\; C([0,T), H^2(\mathbb{R}^3))\cap 
C^1([0,T), L^2(\mathbb{R}^3))
 \]
 satisfying the following growth properties as $\varepsilon\downarrow 0$: for 
 a $\mathcal{E}_{C^1,H^2}$-net there is $N\in\mathbb{N}$ such that
 \begin{equation}
  \max\bigg\{\sup_{t\in[0,T)}\|u_\varepsilon(t)\|_{H^2}\,,\sup_{t\in[0,T)}\|
\partial_t u_\varepsilon(t)\|_2\bigg\}\;\stackrel{\varepsilon\downarrow 0}{=} \;O(\varepsilon^{-N})\,,
 \end{equation}
 and for 
 a $\mathcal{N}_{C^1,H^2}$-net the following holds true $\forall M\in\mathbb{N}$:
 \begin{equation}
  \max\bigg\{\sup_{t\in[0,T)}\|u_\varepsilon(t)\|_{H^2}\,,\sup_{t\in[0,T)}\|
\partial_t u_\varepsilon(t)\|_2\bigg\}\;\stackrel{\varepsilon\downarrow 0}{=} \;O(\varepsilon^{M})\,.
 \end{equation}
 Elements of $\mathcal{E}_{C^1,H^2}([0,T)\times\mathbb{R}^3)$ and of $\mathcal{N}_{C^1,H^2}([0,T)\times\mathbb{R}^3)$ are called, respectively, \emph{moderate nets} and \emph{negligible nets}. 
 $\mathcal{N}_{C^1,H^2}([0,T)
\times\mathbb{R}^3)$ is a (multiplicative) bilateral ideal in $\mathcal{E}_{C^1,H^2}([0,T)
\times\mathbb{R}^3)$, and this allows to introduce the quotient space
 \begin{equation}\label{eq:Gquotient}
  \mathcal{G}_{C^1,H^2}([0,T)\times\mathbb{R}^3)\;:=\;\mathcal{E}_{C^1,H^2}([0,T)
\times\mathbb{R}^3)\,/\,\mathcal{N}_{C^1,H^2}([0,T)\times\mathbb{R}^3)
 \end{equation}
 which is a Colombeau-type vector space. In a completely analogous manner one defines
 \begin{equation}\label{eq:EH2}
  \mathcal{E}_{H^2}(\R^3)\;:=\;\left\{(u_\varepsilon)_{\varepsilon\in(0,1]}\left| 
  \begin{array}{c}
   u_\varepsilon\in H^2(\R^3)\,,\;\textrm{and for each such net} \\
   \exists\,N\in\mathbb{N}\;\textrm{such that}\,\;\| u_\varepsilon\|_{H^2}\stackrel{\varepsilon\downarrow 0}{=} O(\varepsilon^{-N})
  \end{array}
  \!\!\right.\right\},
 \end{equation}
 \begin{equation}
  \mathcal{N}_{H^2}(\R^3)\;:=\;\left\{(u_\varepsilon)_{\varepsilon\in(0,1]}\left| 
  \begin{array}{c}
   u_\varepsilon\in H^2(\R^3)\,,\;\textrm{and for each such net} \\
   \| u_\varepsilon\|_{H^2}\stackrel{\varepsilon\downarrow 0}{=} O(\varepsilon^{M})\;\;\forall M\in\mathbb{N}
  \end{array}
  \!\!\right.\right\},
 \end{equation}
 and
 \begin{equation}
  \mathcal{G}_{H^2}(\R^3)\;:=\;\mathcal{E}_{H^2}(\R^3)\,/\,\mathcal{N}_{H^2}(\R^3)\,,
 \end{equation}
 with the natural vector space structure.

 In fact, precisely because each $H^2(\mathbb{R}^d)$ with $d\in\{1,2,3\}$ is a multiplicative algebra, so too are $\mathcal{G}_{C^1,H^2}([0,T)
\times\mathbb{R}^3)$ and $\mathcal{G}_{H^2}(\mathbb{R}^3)$ with respect to the product inherited by the quotients.

 Furthermore, $\mathcal{E}_{C^1,H^2}([0,T)
\times\mathbb{R}^3)$ and $\mathcal{E}_{H^2}(\mathbb{R}^3)$ can be suitably topologised by semi-norms making them locally convex spaces, and in turn a sharp topology can be defined in $\mathcal{G}_{C^1,H^2}([0,T)
\times\mathbb{R}^3)$ and $\mathcal{G}_{H^2}(\mathbb{R}^3)$ making them topological vector spaces -- see, e.g., \cite{Garetto-2005-topologicalColombeau,Garetto-2005-topologicalColombeau2} for details.

 The Sobolev spaces $H^s(\mathbb{R}^3)$, for $s\in\mathbb{R}$, as well as the space $\mathcal{E}'(\mathbb{R}^3)$ of distributions with compact support, are identified (one also customarily says `embedded') as subspaces of $\mathcal{G}_{H^2}(\mathbb{R}^3)$ by associating to each $F\in H^s(\mathbb{R}^3)$, or $F\in\mathcal{E}'(\mathbb{R}^3)$, the equivalence class of the net $(F*\rho_\varepsilon)_{\varepsilon\in(0,1]}$ up to $\mathcal{N}_{H^2}$-nets, where
 \begin{equation}\label{eq:colemb}
  \begin{split}
     \rho_\varepsilon(x)\,&:=\,\frac{1}{\,\varepsilon^3}\,\rho\Big(\frac{x}{\varepsilon}\Big)\quad\textrm{for some }\rho\in\mathcal{S}(\mathbb{R}^3)\textrm{ such that } 
     \int_{\R^3}\rho(x)\,\ud x\,=\,1\,.\\
  \end{split}
 \end{equation}
 Indeed, all such $F*\rho_\varepsilon$'s are well defined, belong to $H^2(\mathbb{R}^3)$, and satisfy the growth condition \eqref{eq:EH2}. (Recall, in particular, that for $F\in\mathcal{E}'(\mathbb{R}^3)$ the object $F*\rho_\varepsilon$ is the distribution acting on test functions $\varphi$ as $(F*\rho_\varepsilon)(\varphi)=F(\rho_\varepsilon*\varphi)$.)

Next, let us review the notion of \emph{generalised Colombeau solution} to the Cauchy problem for semi-linear Schr\"{o}dinger equations with possible distributional coefficients, explicitly the problem
\begin{equation}\label{linear_eq}
 \begin{cases}
  \;\ii\partial_t u\;=\;-\Delta_x u + c\,\delta\,u+g(u)\,, \\
  u(0,\cdot)\;=\;a\in H^2(\mathbb{R}^3)
 \end{cases}
\end{equation}
 for given $c\in\mathbb{R}$, given non-linearity $g(\cdot)$ of the usual form $g(u)=(w*|u|^2)u$ or $g(u)=\gamma\,|u|^{p-1}u$ with $\gamma\in\mathbb{R}$, $p\geqslant 1$, and given measurable, real-valued $w$. Observe that \eqref{linear_eq} encompasses both the classical problem, when $c=0$, and the one with distributional (Dirac delta) coefficient in the linear part, when $c\neq 0$.

 One says that \eqref{linear_eq} admits a (local in time) solution $u\in \mathcal{G}_{C^1,H^2}([0,T)\times\R^3)$, for some $T>0$, if there are three nets 
 \begin{itemize}
  \item $(u_\varepsilon)_{\varepsilon\in(0,1]}\in\mathcal{E}_{C^1,H^2}([0,T)\times\R^3)$,
  \item $(n_\varepsilon)_{\varepsilon\in(0,1]}\in\mathcal{N}_{H^2}(\R^3)$\,,
  \item and $(M_\varepsilon)_{\varepsilon\in(0,1]}\in C([0,T),L^2(\mathbb{R}^3))$ with  $\|M_\varepsilon\|_{L^\infty([0,T),L^2(\mathbb{R}^3))}\stackrel{\varepsilon\downarrow 0}{=}O(\varepsilon^M)$  $\forall M\in\mathbb{N}$
 \end{itemize}
  satisfying
 \begin{equation}
 \begin{cases}
  \;\ii\partial_t u_\varepsilon-\big(-\Delta_x u_\varepsilon +c\,\delta_\varepsilon\,u_\varepsilon+g(u_\varepsilon)\big)\,=\,M_\varepsilon\,, \\
  u(0,\cdot)\;=\;a_\varepsilon+n_\varepsilon\,,
 \end{cases}
\end{equation}
 where
 \begin{equation}
  a_\varepsilon\,:=\,a*\rho_\varepsilon\,,\qquad \delta_\varepsilon\,:=\,\delta*\rho_\varepsilon\,,
 \end{equation}
 and $\rho_\varepsilon$ is given by \eqref{eq:colemb} (i.e., the equivalence classes $[(a_\varepsilon)_{\varepsilon\in(0,1]}]$ and $[(\delta_\varepsilon)_{\varepsilon\in(0,1]}]$ are the identifications (`embeddings') in $\mathcal{G}_{H^2}(\mathbb{R}^3)$, respectively, of $a\in H^2(\mathbb{R}^3)$ and of $\delta\in\mathcal{E}'(\mathbb{R}^3)$). This definition characterises the generalised solution $u$ independently of its representative $(u_\varepsilon)_{\varepsilon\in(0,1]}\in\mathcal{E}_{C^1,H^2}([0,T)\times\R^3)$ with respect to the quotient \eqref{eq:Gquotient}.

%

 In addition, the problem \eqref{linear_eq} is said to admit a \emph{unique} generalised Colombeau solution if for any two $u,v\in \mathcal{G}_{C^1,H^2}([0,T)\times\R^3)$ satisfying \eqref{linear_eq} in the above generalised Colombeau sense one has
 \begin{equation}
  \|u_\varepsilon-v_\varepsilon\|_{L^\infty([0,T),L^2(\mathbb{R}^3))}\;\stackrel{\varepsilon\downarrow 0}{=}\;O(\varepsilon^M)\qquad\forall M\in\mathbb{N}
 \end{equation}
  for a pair (hence for any pair) of nets $(u_\varepsilon)_{\varepsilon\in(0,1]}$ and $(v_\varepsilon)_{\varepsilon\in(0,1]}$ in $\mathcal{E}_{C^1,H^2}([0,T)\times\R^3)$ representing, respectively, $u$ and $v$.

 Existence and uniqueness of the generalised Colombeau $\mathcal{G}_{C^1,H^2}$-solution $u_{\mathrm{Col}}$ to the Cauchy problem for the Hartree equation with $\delta$-coefficient, namely
 \begin{equation}\label{eq:Hartreeversion}
 \begin{cases}
  \;\ii\partial_t u\;=\;-\Delta_x u +\delta\,u+(w*|u|^2)u\,, \\
  u(0,\cdot)\;=\;a\in H^2(\mathbb{R}^3)\,,
 \end{cases}
\end{equation}
 was established in \cite{Dugandzija-Vojnovic-2021} with even-symmetric $w\in W^{2,p}(\mathbb{R}^3,\mathbb{R})$, $p\in(2,\infty]$: in this case, according to the general definition stated above, a representative net $(u_\varepsilon)_{\varepsilon\in(0,1]}$ of such $u_{\mathrm{Col}}$ satisfies
 \begin{equation}\label{eq:epsCol}
 \begin{cases}
  \;\ii\partial_t u_\varepsilon\;=\;-\Delta_x u_\varepsilon +V_\varepsilon u_\varepsilon +(w*|u_\varepsilon|^2)u_\varepsilon \\
  u(0,\cdot)\;=\;a*\rho_\varepsilon
 \end{cases}\qquad \forall\varepsilon\in(0,1]\,,
\end{equation}
 where $V_\varepsilon$ is given by \eqref{eq:general_scaling} with $\eta\equiv 1$, $\sigma =3$, and $\int_{\mathbb{R}^3}V=1$. On the other hand, the classical Cauchy problem
 \begin{equation}\label{eq:HartreeversionCL}
 \begin{cases}
  \;\ii\partial_t u\;=\;-\Delta_x u +(w*|u|^2)u\,, \\
  u(0,\cdot)\;=\;a\in H^2(\mathbb{R}^3)
 \end{cases}
\end{equation}
 is globally (hence also locally) well posed in $H^2(\mathbb{R}^3)$ in the classical (Sobolev) sense \cite[Section 5.3]{cazenave}, say, with solution $u_{\mathrm{cl}}\in C(\mathbb{R},H^2(\mathbb{R}^3))$. Observe that in general
 \[
  u_{\mathrm{Col}}\;=\;\big[(u_\varepsilon)_{\varepsilon\in(0,1]}\big]\;\neq\;[(u_{\mathrm{cl}}*\rho_\varepsilon)_{\varepsilon\in(0,1]}]
 \]
 that is, such two equivalence classes in $\mathcal{G}_{C^1,H^2}([0,T)\times\R^3)$ do not necessarily coincide.

 A weaker identification that replaces the above coincidence in $\mathcal{G}_{C^1,H^2}([0,T)\times\R^3)$, or lack thereof, is the notion of \emph{association}. One says that  $u\in \mathcal{G}_{C^1, H^2}([0,T)\times\R^3)$ is \emph{associated} with a distribution-valued map $v:[0,T)\to\mathcal{D}'(\R^3)$, and then writes $u\approx v$, if there is a representative $(u_\varepsilon)_{\varepsilon\in(0,1]}$ of $u$ such that $u_\varepsilon(t,\cdot)\xrightarrow{\varepsilon\downarrow 0} v(t)$ in $\mathcal{D}'(\R^3)$ for all $t\in[0,T)$.

 A meaningful association one may establish is of the type
 \[
  u_{\mathrm{Col}}\;=\;\big[(u_\varepsilon)_{\varepsilon\in(0,1]}\big]\;\approx\;u_{\mathrm{cl}}\,.
 \]
 In fact, the association above is typically implied by the following notion of 
%
%
 \emph{compatibility}, formulated as follows for concreteness for the Hartree equation: one says that the Colombeau generalised solution $u_{\mathrm{Col}}=[(u_\varepsilon)_{\varepsilon\in(0,1]}]\in \mathcal{G}_{C^1,H^2}([0,T)\times\R^3)$ to the Cauchy problem \eqref{eq:Hartreeversion}, thus with the $u_\varepsilon$'s satisfying \eqref{eq:epsCol}, is \emph{compatible} with the classical solution $u_{\mathrm{cl}}\in C([0,T),H^2(\mathbb{R}^3))$ to the Cauchy problem \eqref{eq:HartreeversionCL}, precisely when
 \begin{equation}\label{eq:compatibility}
  \lim_{\varepsilon\downarrow 0}\|u_{\mathrm{cl}}-u_\varepsilon \|_{L^\infty([0,T),L^2(\mathbb{R}^3))}\,=\;0\,.
 \end{equation}
 Such a definition does not depend on the representative net for $u_{\mathrm{Col}}$. 
 Clearly, re-interpreting each $u_{\mathrm{cl}}(t,\cdot)$ as a $\mathcal{D}'(\R^3)$-element, the occurrence of \eqref{eq:compatibility} does imply the association $u_{\mathrm{Col}}\approx u_{\mathrm{cl}}$.

 It is worth mentioning that association, compatibility, and variants thereof, between generalised and classical solutions to non-linear PDEs are the object of intensive investigation. This includes, among others, the proof of the analogous of \eqref{eq:compatibility} in $L^\infty([0,T),H^2(\mathbb{R}^3))$ for the semi-linear wave equation \cite{Nedeljkov-Oberguggenberger-Pilipovic-2005}, and in $L^\infty([0,T),\mathcal{D}'(\mathbb{R}^3))$ for hyperbolic conservation laws \cite{Oberguggenberger-Wang-1994}, as well as modified notions such as a type of $L^\infty$-association (thus, stronger than the association introduced above) \cite{Nedeljkov-Pilipovic-1997}.
 
%


%
 
 \section{The operator $\Delta_\alpha$}\label{app:Delta-alpha}

 We recall here the standard construction of the singular point-like perturbation $\Delta_\alpha$ of $\Delta$ in dimension $d=3$ -- see, e.g., \cite[Chapter I.1]{albeverio-solvable} and \cite[Section 3]{MO-2016}.

 The operator $S:=(-\Delta)\upharpoonright C^\infty_c(\mathbb{R}^3\setminus\{0\})$ is densely defined, symmetric, and positive with respect to the Hilbert space $L^2(\mathbb{R}^3)$. Its self-adjoint extensions form the one-parameter family $(-\Delta_\alpha)_{\alpha\in\mathbb{R}\cup\{\infty\}}$, where the element corresponding to $\alpha=\infty$ is the Friedrichs extension of $S$ and consists of the self-adjoint negative Laplacian in $L^2(\mathbb{R}^3)$ with domain $H^2(\mathbb{R}^3)$, whereas, for fixed $\alpha\in\mathbb{R}$ and $\lambda>0$,
 \begin{equation}\label{eq:defDelta-alpha}
  \begin{split} 
   \mathrm{dom}(-\Delta_\alpha)\;&=\;\left\{\psi\in L^2(\mathbb{R}^3)\left|
   \begin{array}{c}
    \exists\, f_\psi^{(\lambda)}\!\in H^2(\mathbb{R}^3)\textrm{ such that} \\
    \psi=f_\psi^{(\lambda)}+\frac{\,f_\psi^{(\lambda)}(0)\,}{\alpha+\frac{\sqrt{\lambda}}{4\pi}}\,G_\lambda 
   \end{array}\!\!\right.\right\}, \\
   (-\Delta_\alpha+\lambda\mathbbm{1})\psi\;&=\;(-\Delta+\lambda\mathbbm{1})f_\psi^{(\lambda)}\,,
  \end{split}
 \end{equation}
 having denoted by $G_\lambda$ the fundamental solution (Green function) to $(-\Delta+\lambda 1)G_\alpha=\delta(x)$, explicitly,
 \begin{equation}\label{eq:GlambdaApp}
  G_\lambda(x)\;=\;\frac{\,e^{-|x|\sqrt{\lambda}}\,}{4\pi|x|}\,.
 \end{equation}
 In \eqref{eq:defDelta-alpha} the space $\mathrm{dom}(-\Delta_\alpha)$ is independent of $\lambda$, whereas the decomposition of $\psi$ into a regular $H^2$-part and a singular part proportional to $G_\lambda$ does depends on the chosen $\lambda>0$; each $\psi\in \mathrm{dom}(-\Delta_\alpha)$ and $\lambda>0$ uniquely identifies $f_\psi^{(\lambda)}$.

 Each self-adjoint extension $-\Delta_\alpha$ has spectrum with the following features:
  \begin{equation}
  \begin{split}
   \sigma_{\mathrm{ess}}(-\Delta_\alpha)\;=\;\sigma_{\mathrm{ac}}(-\Delta_\alpha)\;&=\;[0,+\infty)\,, \\
   \sigma_{\mathrm{sc}}(-\Delta_\alpha)\;&=\;\emptyset\,, \\
   \sigma_{\mathrm{p}}(-\Delta_\alpha)\;&=\;
   \begin{cases}
    \quad \;\;\emptyset\,, &\textrm{ if }\alpha\geqslant 0\,, \\
    \{-(4\pi\alpha)^2\}\,,&\textrm{ if }\alpha <0\,.
   \end{cases}
  \end{split}
 \end{equation}
 The negative eigenvalue, when existing, is non-degenerate, and has normalised eigenfunction
 \begin{equation}
  \Psi_\alpha(x)\;:=\;\sqrt{-2|\alpha|\,}\,\frac{\,e^{-4\pi|\alpha||x|}\,}{|x|}\,.
 \end{equation}

 For each $\lambda\in\mathbb{R}^+\setminus\sigma(-\Delta_\alpha)$ the resolvent of $-\Delta_\alpha$ is given by
 \begin{equation}\label{eq:resolventDalpha}
  (-\Delta_\alpha+\lambda\mathbbm{1})^{-1}\;=\;(-\Delta+\lambda\mathbbm{1})^{-1}+\frac{1}{\alpha+\frac{\sqrt{\lambda}}{4\pi}}\,|G_\lambda\rangle\langle G_\lambda|\,.
 \end{equation}
 It is therefore a rank-one perturbation of the free resolvent.
 
 The operator $\Delta_\alpha$ is a perturbation of $\Delta$ in the sole $L^2$-sector of spherically symmetric functions, and coincides with the self-adjoint Laplacian on all the $H^2$-functions orthogonal to the spherically symmetric ones. 
  The $s$-wave scattering length of $-\Delta_\alpha$ amounts to $-(4\pi\alpha)^{-1}$. 
 Beside, $\Delta_\alpha$ is a local operator, in the sense that if $\psi\in\mathrm{dom}(-\Delta_\alpha)$ vanishes on an open $U\subset\mathbb{R}^3$, then $(\Delta_\alpha\psi)|_U\equiv 0$.

 The integral kernel $K_\alpha(x,y;t)$ of the unitary propagator $e^{\ii t\Delta_\alpha}$, $t>0$, generated by $-\Delta_\alpha$ is also explicit \cite{Albeverio_Brzesniak-Dabrowski-1995}, and given by
 \begin{equation}\label{eq:kernelpropagalpha}
 \begin{split}
  K_\alpha(x,y;t)\;=\;
  \begin{cases}
   \begin{array}{c}
    K(x,y;t)+\displaystyle\frac{1}{\,|x|\,|y|\,}\int_0^{+\infty}e^{-4\pi\alpha s}(s+|x|+|y|)\,\times \\
    \times\,K(s+|x|+|y|,0;t)\,\ud s\,,
   \end{array}& \textrm{if }\alpha>0\,, \\ \\
    K(x,y;t)+\displaystyle\frac{2\,\ii\,t}{\,|x|\,|y|\,}\,K(|x|+|y|,0;t)\,,& \textrm{if }\alpha=0\,, \\ \\
    \begin{array}{l}
    K(x,y;t)+e^{\ii t(4\pi\alpha)^2}\Psi_\alpha(x)\Psi_\alpha(y) \\
    +\displaystyle\frac{1}{\,|x|\,|y|\,}\int_0^{+\infty}e^{-4\pi|\alpha| s}(s-|x|-|y|)\,\times \\
    \qquad\qquad\qquad \times\,K(s-|x|-|y|,0,t)\,\ud s\,,
    \end{array}& \textrm{if }\alpha<0\,,
  \end{cases}
 \end{split}
 \end{equation}
 where
 \begin{equation}
  K(x,y;t)\;:=\;\frac{\,e^{-\frac{\:|x-y|^2}{4\ii t}}}{\,(4\pi\ii t)^{\frac{3}{2}}}\,,\qquad t>0
 \end{equation}
  is the kernel of the free Schr\"{o}dinger propagator.


\def\cprime{$'$}

\end{document}